\documentclass[12pt, reqno]{amsart}
\usepackage{amsmath}
\usepackage{amsfonts}
\usepackage{amssymb}
\usepackage{amsthm}
\usepackage[shortlabels]{enumitem}
\usepackage{graphicx}
\usepackage{color}
\usepackage{dsfont}
\usepackage{MnSymbol}
\usepackage{enumitem}
\usepackage{extpfeil}
\usepackage{mathtools}
\usepackage{url}
\usepackage[margin=1in]{geometry}
\usepackage{fancyhdr}
\usepackage[all,cmtip]{xy}

\newtheorem{theorem}{Theorem}[section]
\newtheorem{lemma}[theorem]{Lemma}

\newtheorem{proposition}[theorem]{Proposition}
\newtheorem{cor}[theorem]{Corollary}

\theoremstyle{remark}

\theoremstyle{definition}

\def\mapnew#1{\smash{\mathop{\longrightarrow}\limits^{#1}}}

\def\<{\langle}
\def\>{\rangle}
\def\-{\overline}

\numberwithin{equation}{section}

\begin{document}

\title[]{Lie algebras and coherence}

\author{Dessislava H. Kochloukova}
\address
{Department of Mathematics, State University of Campinas (UNICAMP), 13083-859, Campinas, SP, Brazil }
\email{desi@unicamp.br}

\thanks{This work was started during a visit at the Department of Mathematics, UC-Riverside in May 2024. The author thanks Stefano Vidussi for the hospitality,the math discussions  and the local financial support. The author is partially supported by  CNPq 305457/2021-7 and FAPESP 2024/14914-9.}
%    author two information

%    \subjclass is required.

\subjclass[2000]{
17B65, % Infinite-dimensional Lie (super)algebras
17B70 % Graded Lie (super)algebras}
16E40, %(Co)homology of rings and algebras
}
\date{\today}
\keywords{Lie algebras, incoherence}

\dedicatory{}

%    "Communicated by" -- provide editor's name; required.
\commby{}

%    Abstract is required.
\begin{abstract}  We determine some sufficient conditions for the split extension of two free finitely generated non-abelian  Lie algebras   $L = F_1 \leftthreetimes F_2$ over an infinite field $K$ to be incoherent. 
\end{abstract}

\maketitle

%    Text of article.

%    Bibliographies can be prepared with BibTeX using amsplain,
%    amsalpha, or (for "historical" overviews) natbib style.
\bibliographystyle{amsplain}
%    Insert the bibliography data here.
[

\section*{Introduction}

In this paper we study incoherence of Lie algebras. All Lie algebras we consider are over a fixed field $K$.
As defined by Roos, Passman and Small in \cite{P-S} and \cite{Roos} a Lie algebra is coherent if every finitely generated Lie subalgebra is finitely presented (in terms of generators and relations) and  we call a Lie algebra incoherent if it is not coherent. Recall that a Lie algebra $L$ is finitely presented if there is a free finitely generated  Lie algebra $F$ and an epimorphism of Lie algebras $\varphi : F \to L$ such that $R = Ker(\varphi)$ is finitely generated as an ideal of $F$. \iffalse{Note that by a version of the Hopf formula for Lie algebras  the homology group $$H_2(L, K) \simeq (R \cap [F,F])/ [R, F],$$ in particular if $L$ is finitely presented then $dim_K H_2(L,K) < \infty$.}\fi

As shown by Roos and later proved with different argument by Passman and Small if $L = F(a,b) \oplus F(c,d)$ is the direct sum of the free Lie algebras $F(a,b)$ and $F(c,d)$ with free basis $\{ a,b \}$ and $\{ c, d \}$, then the Lie subalgebra $R = \langle a, b+ c, d \rangle$ is an ideal of $L$ of codimension 1 that is not finitely presented as a Lie algebra. This provides an example of a Lie algebra that is incoherent.

In \cite{K-MP1} Kochloukova and Martinez-Perez studied subdirect sums of Lie algebras, generalising  the result of Roos and Passman-Small. In particular they showed that if $S$ is a subdirect sum of $F_1 \oplus \ldots \oplus F_k$ such that each $F_i$ is a finitely generated, free Lie algebra and $S \cap F_i \not= 0$ for every $i$ and $S$ is of homological type $FP_2$ then for each projection map $p_{i,j} :  F_1 \oplus \ldots \oplus F_n \to F_i \oplus F_j$, we have that $p_{i,j} (S) = F_i \oplus F_j$. Note that a Lie algebra $L$ is of homological type $FP_2$ if and only if $L \simeq F/ R$, where $F$ is a free, finitely generated Lie algebra and $R/ [R,R]$ is finitely generated as a $U(L)$-module via the adjoint action. Note that every finitely presented Lie algebra is of type $FP_2$ but whether the converse holds is  an open problem.

There is little known about  finite presentability (in terms of generators and relations) of Lie algebras. Still the case of metabelian Lie algebras and center-by-metabelian Lie algebras is completely understood. In \cite{B-G1}, \cite{B-G2}, \cite{B-G3} using methods from commutative algebra Bryant and Groves developed the classification of finitely presented metabelian and center-by-metabelian Lie algebras.

As it is the case with finite presentability, little is known about
coherence of Lie algebras but the same notion in the category of discrete groups is well studied. There is an open conjecture due to Kropholler, Walsh and independently Wise that if $F_1$ and $F_2$ are finitely generated, non-abelian free groups then the semi-direct product $F_1 \rtimes F_2$ is incoherent. Though the conjecture is still open many cases were settled by Kropholler, Walsch, Vidussi, Kochloukova  in   \cite{K-V}, \cite{K-V-W}, \cite{K-W}. In all cases the results obtained use significantly Bieri-Strebel-Neumann $\Sigma$-invariants that were originally defined in \cite{BNS}.
The case of incoherence in the category of pro-$p$ groups was treated \iffalse{by Kochloukova}\fi in \cite{K}. Moving to another category we note that incoherence of associative algebras was studied by Small and Zelmanov  in \cite{S-Z}, where they showed that the algebra of generic matrices $G(m,n)$ is incoherent.

We believe that a Lie algebra version of the Kropholler-Walsh-Wise Conjecture is a natural conjecture to explore in the category of Lie algebras.  For Lie algebras $A$ and $B$ we write $ A \leftthreetimes B$ for the semidirect sum of the Lie algebras $A$ and $B$.

\medskip

{\bf Conjecture} {\it 
 Let $L = F_1 \leftthreetimes F_2$ be a Lie algebra over an infinite field $K$, where $F_1$ and $F_2$ are finitely generated  non-abelian free Lie algebras. Then $L$ is incoherent. }

\medskip
 A $\mathbb{N}$-graded Lie algebra $L$ has a filtration 
$$L = \oplus_{i \geq 1} L_i, \hbox{  where }[L_i, L_j] \subseteq L_{i+ j} \hbox{ for all }i,j \geq 1.$$ As pointed by Weigel in \cite{We} for a  $\mathbb{N}$-graded Lie algebra $L$ we have:

1) $L$ is finitely generated if and only if $dim_K H_1(L, K) < \infty$;

2) $L$ is finitely presented if and only if  $dim_K H_i(L, K) < \infty$ for $i \leq 2$.

The following theorem is our main result. It answers positively the above conjecture under some additional hypothesis and it can be viewed as a generalization of the Roos and Passman-Small result.

\medskip
{\bf Theorem A}  {\it Let $L = F_1 \leftthreetimes F_2$ be a $\mathbb{N}$-graded  Lie algebra over an infinite field $K$, where 

a) $F_1$ and $F_2$ are finitely generated $\mathbb{N}$-graded  Lie  subalgebras, 

b) $F_2$ is free and non-abelian,

c) $F_1$ has an $\mathbb{N}$-graded ideal $N$ such that $N$ is not finitely generated ( as a Lie algebra), $F_1 = N \oplus K y$ as a vector space, $y$ a homogeneous element,  $[L,L] \cap F_1 \subseteq N$

d)  there is a free basis of $F_2$ that contains a homogeneous element $s_1$ that has the same degree as $y$. 

 Then there is an ideal $L_0$ of $L$ such that $dim_K (L/ L_0) = 1$ and $L_0$ as a Lie algebra is finitely generated but it is not finitely presented. In particular $L$ is incoherent.}

\medskip
The reason we need that $K$ is an infinite field is that we need a very specific form of the Noether normalization lemma from commutative algebra that requires the ground field to be infinite, see Theorem \ref{Noether}.

A natural strategy  to prove Theorem A is to try to transfer the methods used in the case of discrete groups to the case of Lie algebras. Unfortunately some of the main ingredients of the group theoretic results in \cite{K-V-W}, \cite{K-W} do not have counterparts in the category of Lie algebras.
We need to work with $\mathbb{N}$-graded Lie algebras because we use  Proposition \ref{sigma-Lie}, whose proof requires $\mathbb{N}$-graded Lie algebras.  In Lemma \ref{not-redundant} we show that Proposition \ref{sigma-Lie} does not hold for Lie algebras that are not $\mathbb{N}$-graded. 

We call a surface Lie algebra the Lie algebra with a presentation ( in terms of generators and relator) $\langle x_1, \ldots, x_{2n} \ | \ [x_1, x_2] + \ldots + [x_{2n-1} , x_{2n} ] = 0 \rangle$ for some $n \geq 1$. We call $x_1, \ldots, x_{2n}$ a standard set of generators. For a free Lie algebra we call a standard set of generators any free basis. For a Lie algebra $L$ we write $d(L)$ for the minimal number of generators of $L$.

Theorem A implies  the following results.

\medskip
{\bf Corollary B1} {\it Let $L = F_1 \leftthreetimes F_2$ be a Lie algebra over an infinite field $K$, where 

a) $F_2$ is a finitely generated  non-abelian free Lie algebra, 

b) $F_1$ is a non-abelian free or surface Lie algebra such that $F_1 \not\subseteq [L,L]$, 

c) $L$ is $\mathbb{N}$-graded with $F_1$ and $F_2$ $\mathbb{N}$-graded subalgebras of $L$, 

d) $F_1$ has a standard set of homogeneous generators all of the same degree, say $n_0$, and $F_2$ has a  free basis with at least one element that is homogeneous of degree $n_0$. 

Then there is an ideal $L_0$ of $L$ such that $dim_K (L/ L_0) = 1$ and $L_0$ as a Lie algebra is finitely generated but it is not finitely presented. In particular $L$ is incoherent.}

\medskip

{\bf Corollary B2} {\it Let $L = F_1 \leftthreetimes F_2$ be a Lie algebra over an infinite field $K$, where 

a) $F_2$ is a finitely generated  non-abelian free Lie algebra,

b)  $F_1$ is a non-abelian free or surface Lie algebra,

c) $L$ is $\mathbb{N}$-graded with  $F_1$ and $F_2$ $\mathbb{N}$-graded subalgebras of $L$, both generated by  homogeneous elements all of degree one, 

d) $d(F_2) > d(F_1)^2$. 

Then  $L$ is incoherent.}

\medskip

\medskip
In Proposition \ref{coherent0} we show that
if $L = F \leftthreetimes Q$ is a $\mathbb{N}$-graded Lie algebra with $F$ finitely generated free,  Lie subalgebra and $Q$ one dimensional  Lie subalgebra, then $L$ is graded coherent i.e. every finitely generated graded subalgebra is finitely presented. Note that in the above we do not suppose that $F$ and $Q$ are graded subalgebras of $L$.  We conjecture that the same holds without the grading condition. The group theoretic case of the following conjecture was proved in \cite{F-H}.

\medskip
{\bf Conjecture} {\it Let $L = F \leftthreetimes Q$ be a Lie algebra with $F$ finitely generated free Lie subalgebra and $dim_K Q = 1$. Then $L$ is  coherent.}

\medskip
In the proofs presented in this paper whenever possible we give homology free arguments. We separate the results that use homological proofs in Section \ref{sec-ex} and Section \ref{secEuler}.

\section{Preliminaries}  \label{sec-hnn}

\subsection{$\mathbb{N}$-graded Lie algebras}

All Lie algebras we consider are over a fixed field $K$.
For a Lie algebra $L$ denote by $U(L)$ the universal enveloping algebra of $L$. Thus $U(L)$ is an associative $K$-algebra with 1, that contains $L$ as a $K$-subspace and for $a,b \in L$ we have that the element $[a,b] \in L$ equals  $ ab - ba$ in $ U(L)$.

A left (resp. right) $L$-module $V$ is a left (resp. right) $U(L)$-module i.e. $[a,b]v = abv - bav$ (resp. $v[a,b] = vab- vba$) for all $v \in V, a,b \in L$.

Denote by $\mathbb{N}$ the set of positive integers $\{ 1,2,\ldots \}$.
 A $\mathbb{N}$-graded Lie algebra $L$ has a decompositions as a direct sum of vector subspaces $L_i$ i.e.
$$L = \oplus_{i \geq 1} L_i, \hbox{  where }[L_i, L_j] \subseteq L_{i+ j} \hbox{ for all }i,j \geq 1.$$ \iffalse{By \cite{We} for an  $\mathbb{N}$-graded Lie algebra $L$ we have:

1) $L$ is finitely generated if and only if $dim_K H_1(L, K) < \infty$,

2) $L$ is finitely presented if and only if  $dim_K H_i(L, K) < \infty$ for $i \leq 2$.

Furthermore}\fi

\begin{lemma} \cite{K-MP3} Let $L$ be an $\mathbb{N}$-graded Lie algebra with an $\mathbb{N}$-graded presentation $\langle X | R_0 \rangle$,
with $X$ minimal and $R_0$ minimal possible once $X$ is fixed. Then
$|X| = dim_K H_1(L,K), |R_0| = dim_K H_2(L,K)$.
\end{lemma}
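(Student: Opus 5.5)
The plan is to pass to graded homology. Write $L = F(X)/R$, where $F = F(X)$ is the free Lie algebra on the homogeneous set $X$ and $R$ is the graded ideal generated by $R_0$. Since $F$ is free, $H_2(F,K)=0$, and the five-term exact sequence for $0 \to R \to F \to L \to 0$ gives the Hopf-type exact sequence
$$0 \to H_2(L,K) \to R/[R,F] \to H_1(F,K) \to H_1(L,K) \to 0 .$$
Here $H_1(F,K) = F/[F,F]$ has basis the images of $X$. All maps respect the $\mathbb{N}$-grading. This is the standard homological input, and it is the only one we need.

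\textbf{Generators.} By hypothesis $X$ is minimal among generating sets. The map $F/[F,F] \to L/[L,L] = H_1(L,K)$ is surjective. We claim it is injective, which forces $|X| = \dim_K H_1(L,K)$. Suppose a nonzero combination $v$ of elements of $X$, all homogeneous of one degree $n$, maps into $[L,L]$. Some $x \in X$ of degree $n$ occurs in $v$ with nonzero coefficient. Then in $L$ we can write $x$ as a combination of the other generators of degree $n$ plus an element of $[L,L]_n$. Because $L$ is $\mathbb{N}$-graded, $[L,L]_n$ is spanned by brackets of elements of degree less than $n$. By induction on degree, such brackets lie in the subalgebra generated by $X \setminus \{x\}$. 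Hence $X \setminus \{x\}$ already generates $L$, contradicting minimality. Positivity of the grading is exactly what makes this graded Nakayama-type argument work. Consequently the map $R/[R,F] \to H_1(F,K)$ is zero, so $R \subseteq [F,F]$ and $H_2(L,K) \cong R/[R,F]$.

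\textbf{Relators.} It remains to show that the images of $R_0$ form a basis of $R/[R,F]$, so that $|R_0| = \dim_K R/[R,F] = \dim_K H_2(L,K)$.
\begin{itemize}
\item \emph{Spanning.} $R$ is spanned by iterated brackets $[r,f_1,\ldots,f_k]$ with $r \in R_0$ and $f_i \in F$. Every term with $k \ge 1$ lies in $[R,F]$, so the images of $R_0$ span $R/[R,F]$.
\item \emph{Independence.} Suppose a nontrivial homogeneous combination of elements of $R_0$ lies in $[R,F]$. Pick some $r$ occurring in it with nonzero coefficient, say of degree $n$. Then $r$ lies in the span of the other relators of degree $n$ plus $[R,F]_n$. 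Since $F$ is positively graded, $[R,F]_n \subseteq [R_{<n},F]$. By induction on degree, $R_{<n}$ lies in the ideal generated by $R_0 \setminus \{r\}$. So $r$ is redundant, contradicting the minimality of $R_0$ for the fixed $X$.
\end{itemize}

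\textbf{Main obstacle.} The delicate point is the two graded Nakayama arguments, because minimality is only assumed as ``cannot remove an element,'' not as an equality of cardinalities. One must be careful that minimality can be tested one degree at a time. This holds because everything is homogeneous and degrees are positive, so the inductions on degree are well founded. When $X$ or $R_0$ is infinite, the same argument shows the images are bases, and the cardinalities still agree.
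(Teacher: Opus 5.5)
Your proof is correct. The paper gives no argument of its own for this lemma; it only cites \cite{K-MP3}, so there is no in-paper proof to compare with. What you wrote is the standard self-contained argument. The Hopf-type sequence $0 \to H_2(L,K) \to R/[R,F] \to F/[F,F] \to L/[L,L] \to 0$ comes from the five-term sequence and $H_2(F,K)=0$. Two graded Nakayama arguments then finish the job. Irredundance of $X$ makes $F/[F,F] \to H_1(L,K)$ an isomorphism, hence $R \subseteq [F,F]$ and $H_2(L,K) \cong R/[R,F]$. Irredundance of $R_0$ makes its image a basis of $R/[R,F]$.

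Two small remarks. First, your ``inductions on degree'' are not needed. Since all degrees are positive, $L_m$ for $m<n$ is spanned by brackets of generators of degree $<n$. Likewise $R_m$ for $m<n$ is spanned by brackets $[r',x_{i_1},\ldots,x_{i_k}]$ with $\deg r' \le m < n$. So neither involves the degree-$n$ element you are removing.

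Second, reading minimality as irredundance is not just convenient but necessary once $X$ is infinite. Let $L$ be free on $y_1,y_2,\ldots$ of degree $1$, and take $X=\{y_i\}\cup\{z_j\}$ with $z_j$ of degree $2$ mapping to $[y_1,y_{j+1}]$. Then $|X|=\aleph_0=\dim_K H_1(L,K)$ is the minimal possible cardinality. However, $R/[R,F]$ is infinite dimensional (the $z_j$ survive in $(R+[F,F])/[F,F]$), so every relator set is infinite, while $H_2(L,K)=0$. For finite $X$ and $R_0$, the case relevant to finite presentability, the two readings of ``minimal'' coincide and your argument covers both.
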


For an $\mathbb{N}$-graded Lie algebra $L$, a left  $U(L)$-module  $V$ is called graded if there is a decomposition of vector spaces $$V = \oplus_{i \geq 0} V_i, \hbox{ where } L_i V_j \subseteq V_{i+ j}$$  \iffalse{Note that $U(L)$ is a graded left (resp. right) $U(L)$ with $U(L)_i$ for $i > 0$ defined as the $K$-subspace generated by all $L_{i_1} \ldots L_{i_k}$, where $i_1 + \ldots + i_k = i, k \geq 1$ and $U(L)_0 = K.1 = K$. This is a grading of $U(L)$  simultaneously as a left and right $U(L)$-module.}\fi
For a graded left $U(L)$-module $V$ we have that $V$ is finitely generated if and only if $K \otimes_{U(L)} V$ is finite dimensional, where $K$ is considered as a trivial right $U(L)$-module i.e. $L$ acts as 0.

 \subsection{Some definitions and results on HNN extensions of Lie algebras}  \ \

 1) First
  we recall some definitions and results from \cite{B-K} on Lyndon-Shirshov monomials
  .
 Let $T$ be a set, $T^*$ the set of all associate words on  $T$ and $T^{\#}$ be the set of all non-associate words on $T$. 
 
 Fix a linear order $< $ in the alphabet $T$ and consider the lexicographic order $< $ in $T^*$  i.e. for $u \in T^* \setminus \{ 1 \}$ we have   $u < 1$ and $x_i w < x_j v$ provided $x_i < x_j$ or $x_i = x_j$ and $w < v$. Let $<<$ be an order on $T^*$ defined by $w <<v$ if either $w$ has smaller length than $v$ or both $w$ and $v$ have the same length and $w < v$.
For $u, w \in T^{\#}$ we set $u < w$ (resp. $u << w$) if the same holds for the associate words obtained from $u,w$ after forgeting brackets.

 A Lyndon-Shirshov monomial is a non-empty word $w$ in $T^{\#}$ such that
 either $w \in T$ or $w = w_1 w_2$ such that $w_1 > w_2$, $w_1$, $w_2$ are Lyndon-Shirshov monomials and if $w_1 = v_1 v_2$ then $v_2 \leq w_2$.  
 
 Denote by $K \langle T \rangle $ the free associative algebra generated by $T$ and by $K \langle T \rangle ^{(-)}$ the Lie algebra with underlying set $K \langle T \rangle $ and  with Lie operation $[x,y] = xy -  yx$. Define $L\langle T \rangle$ the  Lie subalgebra of $K \langle T \rangle ^{(-)}$  generated by  $T$. Note that $L\langle T \rangle$ is a free Lie algebra with a free basis $T$.
 
 \begin{theorem} \cite{B-K} \label{free} The set of Lyndon-Shirshov monomials forms a linear basis of $L\langle T \rangle $.
 \end{theorem}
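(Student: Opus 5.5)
The plan is to prove this in two stages. First we show that the Lyndon--Shirshov monomials span $L\langle T\rangle$. Then we show that they are linearly independent, by comparing their images in the free associative algebra $K\langle T\rangle$ through their leading (highest) associative words.

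For spanning, we use the standard rewriting argument. Every element of $L\langle T\rangle$ is a linear combination of bracketings of letters, so it suffices to rewrite an arbitrary nonassociative word $w\in T^{\#}$ as a combination of Lyndon--Shirshov monomials. We proceed by induction on the length of $w$ and, within a fixed length, on the order $<<$ of the underlying associative word, using the ordering of multisets for the subterms. Write $w=w_1w_2$. By induction we may take $w_1$ and $w_2$ to be Lyndon--Shirshov monomials, and by anticommutativity we may assume $w_1>w_2$; if $w_1=w_2$ the bracket vanishes. If $w_1\in T$, or if $w_1=v_1v_2$ with $v_2\le w_2$, then $w$ is already Lyndon--Shirshov. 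Otherwise $v_2>w_2$, and the Jacobi identity gives
\[
[[v_1,v_2],w_2]=[[v_1,w_2],v_2]+[v_1,[v_2,w_2]].
\]
We must check that each term on the right, once its factors are normalised, is strictly smaller in a suitable well-founded measure. A convenient measure is the pair consisting of the underlying associative word and the length of the right factor, ordered lexicographically. Termination then follows because only finitely many words involve the same multiset of letters.

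For independence, we first prove the key lemma: for a Lyndon--Shirshov monomial $w$, the element $[w]\in K\langle T\rangle$ obtained by expanding commutators equals $\bar w+(\text{words of the same length that are smaller})$, where $\bar w$ is the underlying associative word. Since the map $w\mapsto\bar w$ is injective on Lyndon--Shirshov monomials (each Lyndon word has a unique standard bracketing), distinct monomials have distinct leading words. A triangularity argument then gives linear independence. Finally, because $L\langle T\rangle$ is free on $T$, this basis of $L\langle T\rangle$ is a basis of the free Lie algebra.

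The main obstacle is the leading-term lemma. Proving it requires the combinatorics of Lyndon words: a Lyndon word is strictly smaller than each of its proper cyclic rotations (in the order convention above, where prefixes compare as larger, the correct inequality must be stated carefully), and if $u>v$ are Lyndon then $uv$ is Lyndon. With these facts, expanding $[w_1,w_2]=w_1w_2-w_2w_1$ and applying induction to $w_1$ and $w_2$ shows that the term $\bar w_1\bar w_2$ dominates $\bar w_2\bar w_1$ and every lower-order product. Verifying the rewriting measure in the spanning step also needs care, but it is routine.
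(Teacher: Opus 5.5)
The paper gives no proof of this statement; it is quoted from Bokut--Kukin (it is the Shirshov / Chen--Fox--Lyndon theorem). Your plan is the classical Shirshov argument: Jacobi rewriting for spanning, and leading words in $K\langle T\rangle$ for independence. The independence half is sound, given the Lyndon-word facts you list. It does need one more lemma, which your step ``if $w_1=w_2$ the bracket vanishes'' also uses. That lemma is that an LS monomial in the paper's sense is determined by $\overline{w}$. Concretely, the condition $v_2\le w_2$ must force $\overline{w_2}$ to be the longest proper LS suffix of $\overline{w}$. This is standard, but it is not the same statement as ``a Lyndon word has a unique standard bracketing'', and it needs its own short proof.

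The real gap is in the spanning step. You call it routine, but it is where the content lies.

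\begin{itemize}
\item After you normalise $[v_1,w_2]$ and $[v_2,w_2]$, you get LS monomials $x$ and $y$. Your induction hypothesis says nothing about their underlying words.
\item For $[x,v_2]$ you may have to use anticommutativity, which changes the word to $\overline{v_2}\,\overline{x}$.
\item So, as stated, no measure built from underlying words can be checked to decrease.
\end{itemize}

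You need to strengthen what is being proved. For LS monomials $a,b$ with $\overline a>\overline b$, show that $[a,b]$ is a linear combination of LS monomials $u$ of the same multidegree with $\overline u\le\overline a\,\overline b$.

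Prove this by induction on $|a|+|b|$, and then on the pair $(\overline a\,\overline b,\ |b|)$. Here a smaller word counts as smaller, and so does the same word with a \emph{longer} right factor. This direction is forced, because $[v_1,[v_2,w_2]]$ has the same underlying word as $w$.

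With this bound the check goes through. Write $\overline w=\overline{v_1}\,\overline{v_2}\,\overline{w_2}$, and recall $\overline{v_1}>\overline{v_2}>\overline{w_2}$.

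\begin{itemize}
\item For $x$ coming from $[v_1,w_2]$: $\overline x\,\overline{v_2}\le\overline{v_1}\,\overline{w_2}\,\overline{v_2}<\overline w$. This uses $\overline{v_2}\,\overline{w_2}>\overline{w_2}\,\overline{v_2}$.
\item In the swapped case: $\overline{v_2}\,\overline x\le\overline{v_2}\,\overline{v_1}\,\overline{w_2}<\overline w$. This uses $\overline{v_1}\,\overline{v_2}>\overline{v_2}\,\overline{v_1}$.
\item For $y$ coming from $[v_2,w_2]$: $\overline y\le\overline{v_2}\,\overline{w_2}<\overline{v_1}$, since $\overline{v_2}$ cannot be a proper prefix of $\overline{v_1}$. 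Hence $[v_1,y]$ needs no swap.
\item Moreover $\overline{v_1}\,\overline y\le\overline w$, and the right factor $y$ is longer than $w_2$, so the measure drops.
\end{itemize}

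Without this invariant, your termination claim is unsupported.
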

 
 For   $f \in K \langle T \rangle \setminus \{ 0 \}$ we write $f$ as a linear combination (with coefficientes in $K$) of elements of $T^*$ and write $\overline{f}$ for the higher associative word with respect to $<<$ that is  in $f$.

 \medskip
2)  Let $A$ be a Lie algebra, $B$ a Lie subalgebra of $A$ and $d : B \to A\hbox{  a derivation }$ i.e. a linear map such that for $a, b \in B$ we have
$d([a,b]) = [d(a), b] + [a, d(b)].$
Recall that  by \cite{L-Sh}, \cite{W} a HNN extension  Lie algebra is given by the presentation in terms of generators and relations
 \begin{equation} \label{eq-HNN} L =  \langle A, t \ | \ [t,b] = d(b), b \in B \rangle \end{equation} 
 In \cite{L-Sh} the case of restricted Lie algebras is treated in more details and in \cite{W} the case of ordinary Lie algebras is considered.
In analogy with the HNN construction in group theory we call $A$ the base of the HNN extension, $B$ the associated Lie subalgebra and $t$ the stable letter.
   By \cite{W} $A$ embeds in $L$.

 Let $X$ be a basis of $A$ as a linear space that contains a basis $B_0$ of $B$. We consider a linear order $< $ on $T = X \cup \{ t \}$ such that for $b \in B_0, x \in X \setminus B_0$ we have that
 $b < x < t.$
The following results should be considered as a normal form for elements of HNN extension Lie algebra.

\begin{cor} \label{shirshov}  \cite{W} Let $L$ be the HNN extension  Lie algebra defined by (\ref{eq-HNN}).
The images of the Lyndon-Shirshov monomials $f$ from the free Lie algebras with free generating set $T = X \cup \{ t \}$ in $L$ such that $\overline{f}$ does not contain as a subword an element from  the set $\{ xy \ | \ x, y \in X, x > y \}  \cup \{ tb  \ | \ b \in B_0 \}$ form a basis of $L$ as a linear space (over $K$). 
\end{cor}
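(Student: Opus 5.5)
The plan is to realise $L$ as a quotient of a free Lie algebra by a Gröbner–Shirshov basis, and then read off a linear basis from the Composition–Diamond lemma of \cite{B-K}, which extends Theorem \ref{free}. Let $F = L\langle T\rangle$ with $T = X \cup \{t\}$ and the order $b < x < t$ for $b \in B_0$, $x \in X\setminus B_0$. Fix the order on $X$ once and for all. Then $L$ is $F$ modulo the ideal generated by two families of relations:
\[ R = \{ [x,y] - \textstyle\sum \alpha^{xy}_z z \ : \ x,y \in X,\ x>y\} \cup \{ [t,b] - d(b) \ : \ b \in B_0\}. \]
Here $[x,y] = \sum \alpha^{xy}_z z$ is the multiplication of $A$ written in the basis $X$, and $d(b)$ is written in the basis $X$ as well.

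First I check the leading words under $<<$. The leading associative word of $[x,y]-\sum\alpha z$ is $xy$, since $x>y$ and degree-one words are smaller. The leading associative word of $[t,b]-d(b)$ is $tb$, since $t > b$ and $d(b)$ has degree one. So the set of leading words is exactly $\{xy : x>y\} \cup \{tb : b \in B_0\}$. The Composition–Diamond lemma for Lie algebras then says the following: if $R$ is closed under compositions, the Lyndon–Shirshov monomials whose associative support avoids these leading words as subwords form a basis of $F/(R)$. That is precisely the statement of the corollary.

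The main step is therefore to verify that all compositions of $R$ are trivial modulo $R$. Compositions of inclusion cannot occur, because all leading words have length $2$ and are distinct. The intersection compositions come from overlaps of leading words of length $3$, and there are three types:
\begin{itemize}
\item $xyz$ with $x>y>z$ in $X$;
\item $tbx$ with $b>x$, which forces $x \in B_0$;
\item $txy$, which cannot overlap because $tx$ is a leading word only when $x \in B_0$.
\end{itemize}
The composition for $xyz$ reduces to the Jacobi identity in $A$, so it is trivial. The composition for $tbb'$ with $b>b'$ in $B_0$ reduces to two facts: $d$ is a derivation, and $B$ is a subalgebra. Concretely, the triviality amounts to
\[ d([b,b']) = [d(b),b'] + [b,d(b')], \]
evaluated in $A$ and rewritten in $X$. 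One also uses that $[b,b']$ expands in $B_0$, so that the relation $[t,\cdot]$ applies linearly. In each case I expand the composition and reduce with $R$ until it is a combination of terms $u\,r\,v$ with leading words below the overlap word. This case analysis is the main obstacle, and it requires handling the special Lie words $[u s v]$ carefully.

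Finally, the lemma yields that the images of the irreducible Lyndon–Shirshov monomials are linearly independent and spanning in $L$. This gives the claimed basis; as a byproduct, $A$ embeds in $L$.
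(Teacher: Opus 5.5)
Your proposal is correct. The paper gives no proof of its own and imports the statement from \cite{W}. The Gr\"obner--Shirshov/Composition--Diamond argument you outline is the one this formulation, and the paper's recollection of Lyndon--Shirshov monomials from \cite{B-K}, is built on: the multiplication table of $A$ together with $[t,b]-d(b)$, $b\in B_0$, is closed under compositions. The only overlaps are $xyz$, handled by the Jacobi identity, and $tbb'$, handled by the derivation rule and the fact that $B$ is a subalgebra; there the ordering $B_0<X\setminus B_0$ is exactly what forces $b'\in B_0$.

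One technical point to add is that the Composition--Diamond lemma is usually stated for a well-ordered alphabet, so for an arbitrary linear order on $X$ you should argue the two halves directly. Spanning still follows from the usual reduction by induction on length, since there are only finitely many words of a given multidegree. Independence follows by passing to a well-order that satisfies $B_0<X\setminus B_0<t$ and agrees with the given order on the finitely many letters involved.
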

\iffalse{Example. Consider  $A = K a_1 \oplus K a_2$ a Lie algebra, $B = K a_1$ and $d : B \to A$ the inclusion map. Consider $T = \{ a_1, a_2, t \}$ with order $ a_1 < a_2 < t$. Suppose that $f$ is a Lyndon-Shirshov monomial  that contains $a_1$ and  $\overline{f}$ does not contain as a subword  $ a_2 a_1$ and $ t a_1$, hence $\overline{f} = a_1^m w$ where $w \in T^*$ does not contain $a_1$. 

Suppose $f \not= a_1$. Note that $f$ is obtained from $a_1^m w$ by choosing where to place the brackets. The definition of Lyndon-Shirshov monomial implies that $f = [a_1,[a_1, [\ldots,[ a_1, [w]]...]$, where $[w]$ is a Lyndon-Shirshov monomial with $\overline{[w]} = w$. But $[ a_1, [w]]$ is not a Lyndon-Shirshov monomial, a contradiction.
Thus  either $f = a_1$ or $f$ does not contain $a_1$ and there is no other restriction on $f$, hence $f$ is any  Lyndon-Shirshov monomial on the alphabet $\{ a_2, t \}$. Then by Corollary \ref{shirshov} and Theorem \ref{free} the HNN-extension  (\ref{eq-HNN}) has a decomposition as a vector space $L =  L \langle a_2, t \rangle \oplus K a_1$, where $ L \langle a_2, t \rangle $ is the free Lie algebra with a free basis $\{ a_2, t \}$.}\fi

\begin{cor} \label{product} Let $L$ be the HNN extension  Lie algebra defined by (\ref{eq-HNN}). Then 

a) $A \cap Kt = 0$;

b) if $A_0$ is a Lie subalgebra of $A$ then the subalgebra of $L$ generated by $A_0$ and $t$ is an HNN Lie extension
$\langle A_0, t \ | \ [t,b_0] = d(b_0) \hbox{ for } b_0 \in A_0 \cap B  \rangle$.
\end{cor}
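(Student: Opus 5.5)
Both parts come from the normal form of Corollary \ref{shirshov}, together with the universal property of the presentation (\ref{eq-HNN}). We fix a linear basis $X$ of $A$ containing a basis $B_0$ of $B$, and the order $b < x < t$ on $T = X \cup \{ t \}$.

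For a), the images in $L$ of the Lyndon-Shirshov monomials of length one, namely the letters of $X$ and the letter $t$, satisfy the condition of Corollary \ref{shirshov}: a one-letter word contains no forbidden subword $xy$ or $tb$. Hence these images are part of a linear basis of $L$, so $X \cup \{ t \}$ is linearly independent in $L$. Since $X$ spans $A$, this gives $A \cap Kt = 0$.

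For b), set $B_1 = A_0 \cap B$, a subalgebra of $A_0$, and let $d_1 = d|_{B_1} : B_1 \to A$. Step one is to check that $d_1(B_1) \subseteq A_0$, since otherwise the HNN extension $\widetilde{L} = \langle A_0, t \mid [t,b_0] = d(b_0), b_0 \in B_1 \rangle$ is not even well defined. I would read the statement as implicitly assuming this, or as intending $d(b_0)$ to be interpreted inside the subalgebra generated by $A_0$ and $t$. I will make this explicit: either assume $d(A_0 \cap B) \subseteq A_0$, or replace the base by $A_0$ enlarged inside $A$. Granting it, the inclusions $A_0 \to L$ and $t \mapsto t$ respect the defining relations, so they give a homomorphism $\pi : \widetilde{L} \to L$. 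Its image is exactly the subalgebra generated by $A_0$ and $t$, so it remains to show that $\pi$ is injective.

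For injectivity we choose the bases compatibly. Take a basis $C_0$ of $B_1$, extend it to a basis $B_0 \supseteq C_0$ of $B$ and to a basis $X_0 \supseteq C_0$ of $A_0$. Then extend $X_0 \cup B_0$ to a basis $X$ of $A$. This is possible because $X_0 \cup B_0$ is linearly independent: $A_0 \cap B = B_1$ and the two bases share $C_0$, so $(X_0\setminus C_0)\cup B_0$ is independent modulo nothing overlapping. We then order $X$ so that the elements of $B_0$ come first, then the rest, then $t$; restricted to $X_0 \cup \{ t \}$ this puts $C_0$ first, as Corollary \ref{shirshov} requires for $\widetilde{L}$. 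A Lyndon-Shirshov monomial on $X_0 \cup \{t\}$ avoids the forbidden subwords for $\widetilde{L}$ (namely $xy$ with $x>y$ in $X_0$, and $tc$ with $c \in C_0$) if and only if it avoids the forbidden subwords for $L$. The only possible difference is a subword $tb$ with $b \in B_0 \setminus C_0$, and such a $b$ does not lie in $X_0$ because $B_0 \cap X_0 = C_0$.

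Therefore $\pi$ sends the normal-form basis of $\widetilde{L}$ bijectively onto a subset of the normal-form basis of $L$. This works because in both algebras the basis element attached to a word is the image of the same Lyndon-Shirshov monomial of the free Lie algebra, and $\pi$ is compatible with the map from the free Lie algebra on $X_0 \cup \{ t \}$. Hence $\pi$ is injective.

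The main obstacle is the combinatorial basis-and-order bookkeeping, including the well-definedness issue raised in step one; the rest follows formally from Corollary \ref{shirshov}.
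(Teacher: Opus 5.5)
Your proposal is correct and is essentially the paper's proof. Part a) is read off from the linear independence of the one-letter normal forms in Corollary \ref{shirshov}. Part b) uses the same compatible choice of bases for $A_0\cap B\subseteq A_0$, $B$ and $A$, so that the admissible Lyndon--Shirshov monomials for the smaller extension are exactly the admissible ones for $L$ written in the letters $X_0\cup\{t\}$. Your stated reason that $X_0\cup B_0$ is independent is garbled, but the claim holds: a dependence would put a nonzero combination of $B_0\setminus C_0$ into $A_0\cap B=\operatorname{span}C_0$.

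You are also right that the statement tacitly needs $d(A_0\cap B)\subseteq A_0$ for the smaller HNN extension to make sense. The paper leaves this unsaid, but it holds where the corollary is used, in Proposition \ref{HNN} b), since there $d(B\cap M)\subseteq B\cap M\subseteq A\cap M$.
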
 
 \begin{proof}
 a) By the previous corollary the image of $T = X \cup \{ t \}$ in $L$ is a linearly independent set;
 
 b) Choose a linear basis $X_0$ of $A_0$ that contains a basis $Z_0$ of $B \cap A_0$ and extend $X_0$ to a linear basis $X$ of $A$ and extend $Z_0$ to a linear basis $B_0$ of $B$ such that $B_0 \subseteq X$.
 Then Corollary \ref{shirshov} implies that the embedding of $A_0$ in $A$ induces the embedding of the HNN Lie extension $\langle A_0, t \ | \ [t,b_0] = d(b_0) \hbox{ for } b_0 \in A_0 \cap B \rangle$  in the HNN Lie extension $\langle A, t \ | \ [t,b]  = d(b) \hbox{ for } b \in  B \rangle$  that preserves $t$.
 \end{proof}
 
 \subsection{Amalgamated products of Lie algebras} For $A$ and $B$ Lie algebras with a common Lie subalgebra $C$ we denote by $A \coprod_C B$ the amalgamated product of $A$ and $B$ with an amalgam $C$, see \cite[Chapter 4]{B-K}. Here we identify $A$ and $B$ with their images in $A \coprod_C B$, $A \coprod_C B$ is generated by $A$ and $B$ and the amalgamated product  is defined by the obvious universal property: for any Lie algebra $H$ together with homomorphisms of Lie algebras $\alpha : A \to H$ and $\beta : B \to H$ such that $\alpha |_C = \beta |_C$ there is an unique homomorphism of Lie algebras $\rho : A \coprod_C B \to H$ whose restriction to $A$ and $B$ is $\alpha$ and $\beta$ respectively. 
 
 \section{More on HNN extensions of Lie algebras}

In this section we prove an auxiliar new result on HNN Lie extensions.

\begin{proposition} \label{HNN}
Let $L = \langle A, t \ | \ [B, t] \subseteq B \rangle$ be an HNN extension Lie algebra and $M$ be an ideal of $L$ such that $L/M$ is one dimensional, $B \not\subseteq M$ and $M \cap A$ is finitely generated as a Lie algebra. Then

a) $M$ is finitely generated as a Lie algebra;

b) if  furthermore $M$ is finitely presented (in terms of generators and relations) we can conclude that $M \cap B$ is finitely generated.
\end{proposition}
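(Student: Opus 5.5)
Here $L$ is the HNN extension (\ref{eq-HNN}) with derivation $d=-\mathrm{ad}(t)|_B : B\to B$, so that $[B,t]\subseteq B$. The plan is to mimic the group-theoretic argument (Bieri--Strebel): express $M$ as an iterated ``HNN tower'' over $M\cap A$. Since $L/M$ is one dimensional and $B\not\subseteq M$, pick $b_0\in B\setminus M$. Then $L=M\oplus Kb_0$, and $t=m_t+\lambda b_0$ with $m_t\in M$. The element $s:=t-\lambda b_0\in M$ satisfies $[B,s]\subseteq B$, because $[B,b_0]\subseteq B$. Replacing $t$ by $s$ gives the same Lie algebra: $L=\langle A,s \mid [b,s]=[b,t]-\lambda[b,b_0],\ b\in B\rangle$, again an HNN extension with base $A$, associated subalgebra $B$ and a derivation of $B$. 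So we may assume $t\in M$.

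For a) we show that $M$ is generated by $M\cap A$ and $t$. Let $M'$ be the subalgebra generated by $M\cap A$ and $t$. We have $A=(M\cap A)\oplus Kb_0$ since $b_0\in A\setminus M$. Hence $L$ is generated by $M\cap A$, $b_0$ and $t$, and every element of $L$ is a linear combination of brackets in these generators. A bracket containing $b_0$ at least once lies in $[L,L]$, but we must show it lies in $M'$. The key observation is that $M'$ is normalized by $b_0$:
\begin{itemize}
\item $[b_0,M\cap A]\subseteq M\cap A$, since $M\cap A$ is an ideal of $A$;
\item $[b_0,t]\in B\cap M$, and we need $B\cap M\subseteq M'$.
\end{itemize}
The second point is subtle, so we use $B\cap M\subseteq M\cap A\subseteq M'$, which holds because $B\subseteq A$. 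Then $[b_0,M']\subseteq M'$ by the Jacobi identity and induction on bracket length. Therefore $M'+Kb_0$ is a subalgebra containing all the generators, so it equals $L$. Since $M'\subseteq M$ and $M$ has codimension one not containing $b_0$, we get $M'=M$. As $M\cap A$ is finitely generated, $M$ is generated by finitely many elements together with $t$. This proves a).

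For b) we use Corollary~\ref{product}~b) with $A_0=M\cap A$. It shows that $M=M'$ is the HNN extension
$$\langle M\cap A,\ t \mid [t,c]=d(c),\ c\in (M\cap A)\cap B=M\cap B\rangle .$$
Note that $d(M\cap B)\subseteq B\cap M$, since $M$ is an ideal. So $M$ has the presentation consisting of the generators of $M\cap A$, the letter $t$, the relations of $M\cap A$, and the relations $[t,c]=d(c)$ for $c\in M\cap B$.

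Now suppose $M$ is finitely presented. The standard argument is that finite presentability is independent of the chosen finite generating set. Hence finitely many relations from this (possibly infinite) presentation already suffice, together with finitely many relations of $M\cap A$. These finitely many relations involve only finitely many $c_1,\dots,c_k\in M\cap B$. Let $C$ be the subalgebra generated by $c_1,\dots,c_k$ and their images under $d$, closed up so that $d(C')\subseteq C'$; more carefully, take $C'$ to be the subalgebra of $M\cap B$ generated by the $c_i$. Then $M$ would equal the HNN extension $\langle M\cap A, t\mid [t,c]=d(c),\ c\in C'\rangle$, possibly with further relations of $M\cap A$, which are harmless. By the normal form (Corollary~\ref{shirshov}), in this smaller HNN extension we have $[t,c]\in M\cap A$ only for $c\in C'$, modulo the base. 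Applying this to every $c\in M\cap B$ forces $M\cap B=C'$, which is finitely generated.

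The main obstacle is this last normal-form step. We must show that if $c\in M\cap A$ is not in the associated subalgebra $C'$, then $[t,c]\notin M\cap A$ in $\langle M\cap A,t\mid [t,C']\rangle$. To do this, choose a basis $X$ of $M\cap A$ extending a basis of $C'$ and containing $c$ as a non-associated element. The monomial $[t,c]$ is then a Lyndon--Shirshov word not reducible by the rewriting rules in Corollary~\ref{shirshov}. Hence it is linearly independent from $M\cap A$, which gives the required contradiction.
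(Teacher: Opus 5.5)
Your proof is correct and follows essentially the paper's route. You move the stable letter into $M$ (via the cleaner splitting $L=M\oplus Kb_0$), show $M=\langle M\cap A,t\rangle$ because this subalgebra is normalized by $b_0$, and then view $M$ through Corollary~\ref{product}~b) as an HNN extension whose finite presentability forces finitely many relations $[t,c_i]=d(c_i)$ to suffice. The only difference is in (b): where the paper uses a direct system $M_k\to M_{k+1}$ of non-isomorphisms, you argue directly via the normal form of Corollary~\ref{shirshov} that $[t,c]\notin M\cap A$ for $c\notin C'$, which is exactly the fact the paper's claim that $\pi_k$ is not an isomorphism implicitly relies on.
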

 
\begin{proof}
a) We will show first that if $t \not\in M$ then we can change the stable letter $t$ to another one $t_0 \in M$. Note first that if $ ((K t + B) \setminus B)  \cap M \not= \emptyset $ then we can take $t_0 = k t + b \in   M$ for some  $ k \in K \setminus \{ 0 \},  b \in B$. Then
$[B, t_0] = [B, k t + b] \subseteq [B, kt] + [B, b] = k[B,t] + [B,b] \subseteq k B + B = B$.
Then $L$ is an HNN extension Lie algebra $\langle A, t_0 \ | \ [B, t_0] \subseteq B \rangle$.

Suppose that $(K t + B) \cap M = B \cap M = : C$. Since $B \not\subseteq M$ and $L/M$ is one dimensional, there is $b_0 \in B \setminus \{ 0 \}$ such that $B = K b_0 + C$ and so 
$C = (K t + B) \cap M  = (K t + K b_0 + C) \cap M = ((K t + K b_0) \cap M) + C,$ $ \hbox{ hence } (K t + K b_0) \cap M \subseteq C \subseteq B$.
Since  $t \not\in A, b_0 \in B \subseteq A$ we have $dim_K(K t + K b_0) = 2$ and $dim_K(L/ M) = 1$. We conclude that $(K t + K b_0) \cap M \not= 0$, hence  there are $k_1, k_2 \in K$ not both zero such that $k_1 t + k_2 b_0 \in (K t + K b_0) \cap M \subseteq C \subseteq  B$. If $k_1 \not= 0$ then  $k_1 t + k_2 b_0  \in B$ implies that $t \in B$, a contradiction. If $k_1 = 0, k_2 \not= 0$ then $k_2 b_0 = k_1 t + k_2 b_0 \in C$ implies $b_0 \in C$, hence $B = K b_0 + C = C \subseteq M$, a contradiction with $B \not\subseteq M$.

\iffalse{If  $(K t + B) \cap M = 0$ then 
$$Kt + B \simeq (Kt + B)/ ( (Kt + B) \cap M ) \hbox{ embeds in } L/M.$$ 
Since $dim_K(L/ M) = 1$ and $B \not\subseteq M$ we conclude that the image of $B$ in $L/M$ is the whole $L/M$, hence the image of $Kt + B$ in $L/M$ is the whole $L/M$ i.e. $$dim_K(Kt + B) = 1.$$Since $dim_K (Kt) = 1 = dim_K(Kt + B)$ we conclude that $Kt = Kt + B$, so $B \subseteq Kt$. Note that $B \subseteq A$ and by the normal form in HNN extension Lie algebras we have that $A \cap Kt = 0$, hence $$B \subseteq Kt \cap A = 0 \subseteq M,$$
a contradiction with $B \not\subseteq M$.}\fi

By substituting $t$ with $t_0$ if necessary we can assume from the very beginning that $t \in M.$
Let $L_0$ be the Lie subalgebra of $L$ generated by $A \cap M$ and $t$.

\medskip
{\bf Claim 1} $L_0$ is an ideal of $L$.

\medskip
Since $dim_K L/ M = 1$ we conclude that $dim_K (A/ A \cap M) \leq 1$, otherwise $A \cap M = A$ then $B \subseteq A \subseteq M$, a contradiction since $B \not\subseteq M$. Furthermore $B \not\subseteq A \cap M$ implies that $A = M \cap A + B \subseteq  \langle M \cap A, B \rangle \subseteq A, \hbox{ hence } A = \langle M \cap A, B \rangle.$
Then
$L = \langle A, t \rangle = \langle M \cap A, B, t \rangle.$
Note that $\langle M \cap A, t \rangle = L_0$, hence  
$L = \langle L_0, B \rangle$.
implies that to prove Claim 1 is equivalent to show the following claim 

\medskip
{\bf Claim 2}
$[L_0, B ] \subseteq L_0$.

\medskip
This is equivalent to showing that for every $\lambda = [\lambda_1, \ldots, \lambda_n]$ a left normed commutator where $\lambda_1, \ldots, \lambda_n \in (A \cap M) \cup \{ t \}$ and $b \in B$ we have
$[\lambda, b ] \in L_0.$
To show this we induct on $n$.

First suppose that $n = 1$. If $\lambda_1 \in A \cap M$ then using that $A \cap M$ is an ideal of $A$ we have
$[\lambda_1, b] \subseteq [A \cap M, b] \subseteq A \cap M \subseteq L_0.$
If $\lambda_1 = t$ then using that $t \in M$
$[\lambda_1, b] = [t,b] \subseteq B \cap [M, B] \subseteq B \cap M \subseteq A \cap M \subseteq L_0.$

For $n > 1$ we have
$[\lambda, b] = [\lambda_1, \ldots, \lambda_n, b] =
[\lambda_1, \ldots, \lambda_{n-1}, b, \lambda_n] + [\lambda_1, \ldots, \lambda_{n-1},$ $[ \lambda_n,b]]$.
By the case $n = 1$ we have that $[\lambda_n, b] \in L_0$, hence by induction on $n$
$[\lambda_1, \ldots, \lambda_{n-1},[ \lambda_n,b]] \subseteq [\lambda_1, \ldots, \lambda_{n-1}, L_0] \subseteq L_0$.
\iffalse{
$$
[L_0, [\lambda_n,b]] \subseteq [L_0, L_0] \subseteq L_0.$$}\fi
By induction $[\lambda_1, \ldots, \lambda_{n-1}, b] \in L_0 $, hence
$[\lambda_1, \ldots, \lambda_{n-1}, b, \lambda_n] \in [L_0, \lambda_n] \subseteq [L_0, L_0] \subseteq L_0.$
This completes the inductive step of the proof of Claim 2 and completes the proofs of both claims.

\medskip
Then
$ L/ L_0 \simeq \langle A, t \rangle / \langle A \cap M, t \rangle \simeq A / (A \cap M) \simeq K$
i.e. $dim_K L/ L_0 = 1$. This combined with the fact that $L_0 \subseteq M \subseteq L$ and $dim_K L/ M = 1$ implies that $M = L_0 = \langle A \cap M, t \rangle$ is finitely generated as required.

b) Note that $M = L_0$ and by construction
$L_0 = \langle A \cap M, t \rangle.$
Thus by Corollary \ref{product} b) $L_0$ is itself an HNN extension Lie algebra with stable letter $t$ and base Lie subalgebra $A \cap M$ i.e.
$$
L_0 = \langle A \cap M, t \ | \ [B \cap M, t] \subseteq B \cap M \rangle.
$$
 
 Suppose now that $M$ is finitely presented and $B \cap M$ is not finitely generated.  Let $ \{ b_i \ | \ i\geq 1 \}$ be an infinite generating set of $B \cap M$. Then we define for an integer $k \geq 1$
 $$M_k = \langle A \cap M, t \ | \ [b_i,t] = \widetilde{b}_i \hbox{ for }  i \leq k \rangle$$
  where $[b_i,t] = \widetilde{b}_i $ in $M$. 
  Note that there is an epimorphism $\pi_k : M_k \to M_{k+1}$ that is not an isomorphism. Then $\{ M_k, \pi_k \}_k$ is a direct system of Lie algebras with direct limit $M$. Since $M$ is finitely presented, say with a finite set of relations $R$, then for sufficiently big $k_0$ we have that the elements of $R$ are relations in $M_{k_0}$, hence the canonical map $M_{k_0} \to M$ is an isomorphism. This implies that each $\pi_k$ for $k \geq k_0$ is an isomorphism, a contradiction.
\end{proof}

\section{On codimension 1 ideals via the Noether normalization theorem}

Recall from commutative algebra that if $A$ and $B$ are commutative rings with unity and $B$ is a subring of $A$ we say that $A$ is integral over $B$ if
 every $a \in A$ is integral over $B$ i.e. a root of a monic polynomial with coefficients in $B$. If $A = B[a_1, \ldots, a_s]$ then 
 $A$ considered as a $B$-module (via multiplication) is finitely generated if and only if each $a_i$ is integral over $B$. 
The following is a version of the Noether normalization theorem.

\begin{theorem} \cite[Ch. II,~Thm.3.1]{Ku} \label{Noether}
Let $K$ be an infinite field and $A = K[y_1, \ldots, y_m]$ a finitely generated commutative $K$-algebra. Then there exist $x_1, \ldots, x_n \in V = Ky_1 + \ldots + K y_m$ such that $B = K[x_1, \ldots, x_n]$ is a polynomial ring on $n$ variables for some $n \geq 0$ and $A$ is an integral extension of $B$.
\end{theorem}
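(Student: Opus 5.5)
The plan is to prove Theorem \ref{Noether} by induction on $m$, the number of generators, using a linear change of variables. Linearity is what keeps the new generators inside $V = Ky_1 + \ldots + Ky_m$. For $m = 0$ we have $A = K$ and take $n = 0$.

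For the inductive step there are two cases.

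\emph{Independent case.} If $y_1, \ldots, y_m$ are algebraically independent over $K$, then $A$ is itself a polynomial ring. We take $n = m$ and $x_i = y_i$.

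\emph{Dependent case.} Otherwise there is a nonzero polynomial $f \in K[X_1, \ldots, X_m]$ with $f(y_1, \ldots, y_m) = 0$. Let $d = \deg f$ and let $f_d$ be the homogeneous component of $f$ of degree $d$. The polynomial $g(X_1, \ldots, X_{m-1}) = f_d(X_1, \ldots, X_{m-1}, 1)$ is nonzero. Indeed, $f_d$ is homogeneous and nonzero, so it is recovered from $g$ by homogenizing.

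Since $K$ is infinite, a nonzero polynomial in finitely many variables over $K$ has a non-root in $K^{m-1}$. This follows by a standard induction on the number of variables, because a nonzero one-variable polynomial has only finitely many roots. So we choose $\lambda_1, \ldots, \lambda_{m-1} \in K$ with $c := g(\lambda_1, \ldots, \lambda_{m-1}) \neq 0$. This is the one step where infiniteness of $K$ is used, and it is the only delicate point.

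Put $z_i = y_i - \lambda_i y_m \in V$ for $1 \leq i \leq m-1$. Substituting $y_i = z_i + \lambda_i y_m$ into $f$ gives
$$0 = f(z_1 + \lambda_1 y_m, \ldots, z_{m-1} + \lambda_{m-1} y_m, y_m) = c\, y_m^d + \sum_{j < d} h_j(z_1, \ldots, z_{m-1})\, y_m^j ,$$
with each $h_j$ a polynomial. The reason is that the coefficient of $y_m^d$ comes only from the top-degree part $f_d$, and there it equals $f_d(\lambda_1, \ldots, \lambda_{m-1}, 1) = c$. Dividing by $c$ shows that $y_m$ is integral over $A' = K[z_1, \ldots, z_{m-1}]$.

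Now $A = A'[y_m]$, since each $y_i = z_i + \lambda_i y_m$. So $A$ is a finitely generated $A'$-module, hence integral over $A'$.

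Apply the induction hypothesis to $A'$, which has $m-1$ generators $z_1, \ldots, z_{m-1}$. This gives $x_1, \ldots, x_n \in Kz_1 + \ldots + Kz_{m-1} \subseteq V$ such that $B = K[x_1, \ldots, x_n]$ is a polynomial ring and $A'$ is integral over $B$. Integrality is transitive: if $A$ is a finite $A'$-module and $A'$ is a finite $B$-module, then $A$ is a finite $B$-module. Hence $A$ is integral over $B$, which completes the induction.
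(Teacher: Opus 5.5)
Your proof is correct. After the substitution $y_i = z_i + \lambda_i y_m$, the coefficient of $y_m^d$ is indeed $f_d(\lambda_1,\ldots,\lambda_{m-1},1)=c\neq 0$. The transitivity step also holds once you note that $A' = B[z_1,\ldots,z_{m-1}]$ is finitely generated and integral over $B$, hence a finite $B$-module.

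The paper does not prove this theorem; it only cites Kunz. The standard argument for infinite $K$ is the one you give: a linear change of variables that makes a relation monic in $y_m$, followed by induction on the number of generators.
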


In the above theorem $n$ is the Krull dimension of the ring $A$.
We will need the following simplified version of the above theorem.

\begin{cor}  \label{cor-noether}Let $K$ be an infinite field and $A = K[y_1, y_2]$ a finitely generated commutative $K$-algebra of Krull dimension $n \leq 1$.
Suppose that $A$ is finitely generated as $B_0$-module, where $B_0 = K[y_1]$. Then either 

1) $A$ is finite dimensional over $K$ or 

2) $B_0$ is a polynomial ring and there is a finite subset $K_0 \subset K$ such that for any $\lambda \in K \setminus K_0$ and  $x_1 = \lambda
 y_1 + y_2$,  we have that $B = K[x_1]$ is a polynomial ring  and $A$ is an integral extension of $B$.
\end{cor}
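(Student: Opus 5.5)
Since $A$ is a finitely generated $B_0$-module, every element of $A$ is integral over $B_0 = K[y_1]$; in particular $y_2$ satisfies a monic relation
\[ y_2^m + c_{m-1}(y_1) y_2^{m-1} + \ldots + c_0(y_1) = 0 \]
with $c_i \in K[y_1]$. The plan is to split into two cases according to whether $y_1$ is algebraic over $K$ or not.

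\emph{Case 1: $y_1$ is algebraic over $K$.} Then $B_0$ is a finite-dimensional $K$-vector space. Since $A$ is a finitely generated $B_0$-module, $A$ is finite dimensional over $K$, and we are in alternative 1).

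\emph{Case 2: $y_1$ is transcendental over $K$.} Then $B_0$ is a polynomial ring. Integrality is transitive, and $A$ is integral over $B_0$, so it suffices to show that for all but finitely many $\lambda$ the element $y_1$ is integral over $K[x_1]$ with $x_1 = \lambda y_1 + y_2$. Indeed, $y_2 = x_1 - \lambda y_1$ then lies in $K[x_1, y_1]$, which is integral over $K[x_1]$, so $A = K[x_1, y_1]$ is integral over $K[x_1]$.

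To get integrality of $y_1$, substitute $y_2 = x_1 - \lambda y_1$ into the monic relation, viewing it as a relation $P(y_1, y_2) = 0$ with $P(Y_1, Y_2) = Y_2^m + \sum_{i<m} c_i(Y_1) Y_2^i$. Let $d$ be the total degree of $P$ and $P_d$ its top homogeneous component. Then $P(Y_1, X - \lambda Y_1)$, viewed as a polynomial in $Y_1$ over $K[X]$, has $Y_1^d$-coefficient equal to $P_d(1, -\lambda)$. Now $P_d(1, -\lambda)$ is a polynomial in $\lambda$, and it is not identically zero because $P_d$ is a nonzero homogeneous form: $P_d(Y_1, Y_2) = Y_1^d P_d(1, Y_2/Y_1)$ when $P_d \neq 0$. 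Since $P_d(1, -\lambda)$ has finitely many roots, define $K_0$ to be this set of roots. For $\lambda \notin K_0$, dividing by the nonzero constant $P_d(1, -\lambda)$ gives a monic equation for $y_1$ over $K[x_1]$. This is exactly where the infinitude of $K$ is used: it guarantees that $K \setminus K_0$ is nonempty.

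It remains to see that $K[x_1]$ is a polynomial ring, i.e.\ that $x_1$ is transcendental. If $x_1$ were algebraic over $K$, then $K[x_1]$ would be finite dimensional, and $A$, being integral and finitely generated over it, would be finite dimensional. That would force $y_1$ to be algebraic, contradicting the case assumption.

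The main obstacle is the degree bookkeeping: one must check that the top coefficient after substitution is a nonzero polynomial in $\lambda$. Working with the top homogeneous component $P_d$ handles this. The hypothesis that the Krull dimension is at most $1$ is not really needed in this argument; it is consistent with the conclusion, since Case 2 gives Krull dimension exactly $1$.
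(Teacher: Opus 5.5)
Your proof is correct and follows the paper's route. Both arguments substitute $y_2 = x_1 - \lambda y_1$ into a monic equation for $y_2$ over $K[y_1]$ and discard the finitely many $\lambda$ for which the result is not monic in $y_1$ over $K[x_1]$; you split on whether $y_1$ is algebraic rather than on Krull dimension, which also shows the dimension hypothesis is redundant. Your use of the top homogeneous component, whose $y_1^d$-coefficient is the constant $P_d(1,-\lambda)$, is a cleaner and more precise version of the paper's degree condition on the ``part depending only on $y_1$'', and you also check explicitly that $K[x_1]$ is a polynomial ring, which the paper leaves implicit.
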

 
\begin{proof}
If $n = 0$ then $A$ is finite dimensional.

Suppose $n = 1$.  Since $A$ and $B_0$ have the same Krull dimension, we conclude that  $B_0$ has Krull dimension 1, hence $B_0$ is  a polynomial ring.

 Let
$
f(t) = t^d + b_{d-1} t^{d-1} + \ldots + b_{i} t^{i} + \ldots +  b_0 \in B_0[t]
$
be a monic non-zero polynomial of smallest possible degree $d$ such that $f(y_2) = 0$ in $A$.
Thus
$ y_2^d + b_{d-1} y_2^{d-1} + \ldots + b_{i}  y_2^{i} + \ldots + b_0  = 0
$
in $A$. Substitute $y_2 = x_1 - \lambda y_1$ above for $\lambda \in K \setminus \{ 0 \}$ and note that $A = K[x_1, y_1]$.  Thus we have
$$(x_1 - \lambda y_1)^{d} + b_{d-1}(x_1 - \lambda y_1)^{d-1} + \ldots + b_{i}(x_1 - \lambda y_1)^{i}
+ \ldots + b_0  = 0$$
and it can be written ( after dividing by an appropriate element of $K$) as $y_1$ satisfying a monic polynomial over $K[x_1]$ if 
the part of the above equation that depends only on $y_1$ is non-zero i.e. 
 $$g(y_1) = ( - \lambda y_1)^{d-1} + b_{d-1}( - \lambda y_1)^{d-2} + \ldots + b_{i}( - \lambda y_1)^{i}
+ \ldots + b_1   \not=  0$$
is a non-zero polynomial, and $deg(g) = max_{0 \leq i \leq d} \{ deg (b_i ( -\lambda y_1)^{i-1}) \}$ where $b_d = 1$. For this to hold we have to avoid the roots of finitely many polynomials i.e.  this is true for all but finitely many elements $\lambda$ from $K$.
\end{proof}

\begin{proposition} \label{sigma-Lie}
Let $L$ be an $\mathbb{N}$-graded Lie algebra over an infinite field $K$ with a graded ideal $N$ such that  $L/ N$ is abelian and  $dim_K L/ N = 2$. Suppose furthermore that $L = N \oplus K x_1 \oplus K x_2$ where the direct sum is of vector spaces, $x_1$ and $x_2$ are homogeneous elements of the same degree and $M = N \oplus K x_1$ is finitely generated as a Lie algebra. Then there exists a $\mathbb{N}$-graded  ideal $S_0$ of $L$ such that

a) $S_0 \not= M$,  $N \subseteq S_0$, $dim_K  L/ S_0 = 1$;

b) $S_0$ is finitely generated as a Lie algebra.

\end{proposition}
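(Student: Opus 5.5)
The plan is to linearise the problem. Pass to the abelianisation $V = N/[N,N]$, viewed as a module over the commutative polynomial ring $K[x_1,x_2] = U(L/N)$ via the adjoint action; this action is well defined because $L/N$ is abelian and of dimension $2$. Then translate finite generation of the relevant codimension-one ideals into finite generation of $V$ over a one-variable subring, and use Corollary \ref{cor-noether} to move from $x_1$ to a generic combination $\lambda x_1 + x_2$.

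\textbf{Step 1: translate the hypothesis on $M$.} Since $N$ is a graded ideal and $x_1, x_2$ are homogeneous, $V$ is a graded module and $x_1, x_2$ act by maps of positive degree. For a graded Lie algebra, finite generation is equivalent to $\dim_K H_1(-,K) < \infty$. Since $M = N \oplus K x_1$ we have $[M,M] = [N,N] + [x_1,N]$, so
\[
M/[M,M] \simeq V/x_1V \oplus K .
\]
Hence $V/x_1V$ is finite dimensional. By the graded Nakayama lemma, with positive degrees, $V$ is a finitely generated $K[x_1]$-module.

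\textbf{Step 2: set up the commutative algebra.} Let $A = K[x_1,x_2]/\mathrm{Ann}(V)$, which is generated by the images $y_1, y_2$ of $x_1, x_2$. Then $A$ embeds in $\mathrm{End}_{K[y_1]}(V)$, and this is a finitely generated module over the noetherian ring $K[y_1]$. So $A$ is finite over $B_0 = K[y_1]$ and has Krull dimension at most $1$.

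Corollary \ref{cor-noether} now gives two cases. Either $A$ is finite dimensional, and then $V$ is finite dimensional. Or, for all $\lambda$ outside a finite set, $A$ is integral over $K[\lambda y_1 + y_2]$. Since $K$ is infinite such a $\lambda$ exists. In both cases, choose $\lambda$ so that $V$ is finitely generated over $K[x]$, where $x = \lambda x_1 + x_2$; in the first case any $\lambda$ works. This uses that $V$ is finite over $A$ and $A$ is finite over $K[x]$.

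\textbf{Step 3: define $S_0$ and check (a).} Set $S_0 = N \oplus Kx$. Since $x_1$ and $x_2$ have the same degree, $x$ is homogeneous, so $S_0$ is graded. It is an ideal because $[L,L] \subseteq N$, which follows from $L/N$ being abelian. We have $N \subseteq S_0$ and $\dim_K L/S_0 = 1$. Also $S_0 \neq M$, because the coefficient of $x_2$ in $x$ is $1$ and $L = N \oplus Kx_1 \oplus Kx_2$.

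\textbf{Step 4: check (b).} As in Step 1,
\[
S_0/[S_0,S_0] \simeq V/xV \oplus K .
\]
This is finite dimensional because $V$ is a finitely generated graded $K[x]$-module. By the graded criterion for finite generation, $S_0$ is finitely generated.

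\textbf{Main obstacle.} The delicate point is Step 2, specifically checking that the hypotheses of Corollary \ref{cor-noether} hold exactly: $A$ is generated by the two elements $y_1, y_2$, has Krull dimension at most $1$, and is finite over $K[y_1]$. One must also pass correctly from "$A$ integral over $K[x]$" to "$V$ finitely generated over $K[x]$". The grading is used twice, in both Nakayama steps, and it is what allows $H_1$ to detect finite generation.
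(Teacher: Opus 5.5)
Your proof is correct and follows the paper's argument. You pass to $N/[N,N]$ as a module over $U(L/N)=K[x_1,x_2]$, form $A=K[x_1,x_2]/\mathrm{Ann}$, show $A$ is finite over $K[y_1]$, and apply Corollary \ref{cor-noether} to get $S_0=N\oplus K(\lambda x_1+x_2)$ for generic $\lambda$. The differences are minor: you make explicit the graded Nakayama step that the paper leaves implicit, and you obtain finiteness of $A$ over $K[y_1]$ from the embedding $A\hookrightarrow \mathrm{End}_{K[y_1]}(V)$ instead of the paper's Cayley--Hamilton argument.
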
  

\begin{proof}
Recall that an $\mathbb{N}$-graded Lie algebra is finitely generated if and only if its abelianization is finitely generated i.e. is finite dimensional. For more details on this and other properties of $\mathbb{N}$-graded Lie algebras the reader can check the preliminaries of \cite{We}.

Thus to find  a $\mathbb{N}$-graded  ideal $S_0$  that is finitely generated as a Lie algebra  we need only to prove  that $S_0/ [S_0, S_0]$ is finite dimensional. Thus it suffices to show that the image of $S_0$ in $L/ I$ is finitely generated, where $I$ is some fixed graded ideal of $L$ that is contained in $[S_0, S_0]$. We fix $I = [N,N]$. Thus we can assume from the very beginning that $N$ is abelian.

Note that the universal enveloping algebra $U(L/ N) $ can be identified with a commutative polynomial ring $K[z_1, z_2]$, where $L/ N = K z_1 \oplus K z_2$, each $z_i$ is the image of $x_i$ in $L/ N$ and $M/ N = K z_1$. Consider the commutative $K$-algebra
$$A = K[z_1, z_2]/ ann_{K[z_1, z_2]} (N)$$
Here we view $N$ as a $U(L/N)$-module via the adjoint action of $L$, hence as a $ K[z_1, z_2]$-module  and $ann$ means the annihilator i.e. the elements that act as zero.

Let $y_1, y_2$ be the images of $z_1, z_2$ in $A$. Thus $A$ is a finitely generated $K$-algebra with generators $y_1$ and $y_2$.
We set $B = K[y_1]$, a subalgebra of $A$.

\medskip
{\bf Claim} $A$ is an integral extension of $B$.

\medskip
Indeed since $L$ is a finitely generated Lie algebra  and $L/ N$ is a finitely presented Lie algebra, we conclude that $N$ is finitely generated as a module over $U(L/ N)$ via the adjoint action i.e. $N$ is a finitely generated as an $A$-module. Since $M$ is finitely generated as a Lie algebra we have that $N$ is finitely generated as a right  $B$-module, i.e. for some $n_1, \ldots, n_s \in N$ we have   $$N = n_1 B + \ldots + n_s B$$ where $n_i b$ is the element obtained from $n_i$ after applying the adjoint action of $b$. Consider the $s \times s$-matrix $T = (b_{ij})$, where $b_{ij} \in B$ and $n_i y_2 = \sum_j n_j b_{ji}$. Then $T$ satisfies its monic Hamilton-Kelly polynomial ( the characteristic poly) with coeficients in $B$, hence $y_2$ is integral over $B$. Since $A = B[y_2]$ we conclude that $A$ is finitely generated over $B$. This completes the proof of the claim.

\medskip
If $B$ is finite dimensional over $K$ then $A$ is finite dimensional over $K$, hence any codimension  one ideal $S_0$ will work.

Suppose that $B$ is infinite dimensional over $K$. Then $B$ is a polynomial ring in one variable. Since $A$ is finitely generated over $B$ and integral extension does not change the Krull dimension, we conclude that  $A$ has Krull dimension 1. Then by  Corollary \ref{cor-noether} we can choose $S_0$ such that $S_0/ N = K t,$ where $t \in  K y_1 + y_2$ and the subring $K[t] $ of $A $ is a polynomial ring in one variable with $A$  finitely generated over $K[t].$
Then there exist $a_1, \ldots, a_m \in A$ such that $A = a_1 K[t] + \ldots + a_m K[t]$
as a $K[t]$-module. Then
$N = n_1 B + \ldots + n_s B = n_1 A + \ldots + n_s A = \sum_{1 \leq i \leq s, 1 \leq j \leq m} n_i a_j K[t]$
is a finitely generated $K[t]$-module, hence $S_0$ is finitely generated as a Lie algebra.
\end{proof}

\section{Proof of Theorem A and Corollaries B1 and B2}

{\bf Proof of Theorem A}  Let $s_1, \ldots, s_n$ be a free basis of homogeneous elements of  $F_2$. Consider $$L_i = F_1 \leftthreetimes \langle s_i \rangle$$ where the adjoint action of $s_i$ on $F_1$ in $L_i$ is the adjoint action of $s_i$ on $F_1$ in $L$ i.e. $L_i$ is the subalgebra of $L$ generated by $F_2$ and $s_i$. Then we have a decomposition as a free amalgamated product of Lie algebras
$$L = L_1 \coprod_{F_1} L_2 \coprod_{F_1} \ldots \coprod_{F_1} L_n$$

Recall that
$N \subseteq F_1 \subseteq L_1$
with $dim_K F_1/ N = 1 = dim_K L_1/ F_1$ and $F_1$ is finitely generated as a Lie algebra. Note that $[L_1, L_1] \subseteq F_1 \cap [L,L] \subseteq N,$
hence $L_1/ N$ is abelian. Note that $L_1$ is $\mathbb{N}$-graded with $N$ and $F_1$ graded subalgebras and  $L_1 = N \oplus K y \oplus K s_1$ with $y \in F_1$, $s_1$ and $y$ homogeneous elements of the same degree.  

By Proposition \ref{sigma-Lie} there is an ideal $S_0$ of $L_1$ such that
$N \subseteq S_0 \subseteq L_1$
with $dim_K S_0/ N = 1 = dim_K L_1/ S_0$,  $S_0 \not= F_1$ and $S_0$ is finitely generated as a Lie algebra.
By construction $S_0 = N \leftthreetimes \langle s_1 + \lambda y \rangle $
where $F_1 = N \leftthreetimes \langle y \rangle$ and  $\lambda$ could be any element from  $ K \setminus K_0$, for some finite subset $K_0$ of $K$. Since $K$ is infinite we can assume that $\lambda \not= 0$.

Since $L = F_1 \leftthreetimes F_2$ we have
$[L,L] \cap L_1 = ([F_1, F_1] + [F_1, F_2] + [F_2, F_2]) \cap (F_1 + K(s_1 + \lambda y )) = ([F_1, F_1] + [F_1, F_2]) $ $+ ([F_2, F_2] \cap K(s_1 + \lambda y )) =  [F_1, F_1] + [F_1, F_2] \subseteq [L,L] \cap F_1 \subseteq N \subseteq S_0.$
Since  $[L,L] \cap L_1 \subseteq S_0$ and $1 = dim_K L_1/ S_0$, there is an epimorphism of Lie algebras
$\mu : L \to K$ such that $Ker(\mu) \cap L_1 = S_0$. Hence
$Ker(\mu) \cap F_1 = (Ker(\mu) \cap L_1) \cap F_1 = S_0 \cap F_1 = N.$
Consider the Lie algebra
$$L_1 \coprod_{F_1} L_2 = \langle L_1, s_2 \ | \ [F_1, s_2] \subseteq F_1 \rangle$$
Note it is an HNN extension Lie algebra.
By Proposition \ref{HNN} $Ker(\mu) \cap (L_1 \coprod_{F_1} L_2)$ is finitely generated if $Ker(\mu) \cap L_1 = S_0$ is finitely generated.

Consider the Lie algebra
$$L_1 \coprod_{F_1} L_2 \coprod_{F_1} L_3 = \langle L_1 \coprod_{F_1} L_2, s_3 \ | \ [F_1, s_3] \subseteq F_1 \rangle$$
By Proposition \ref{HNN}  $Ker(\mu) \cap (L_1 \coprod_{F_1} L_2 \coprod_{F_1} L_3)$ is finitely generated if $Ker(\mu) \cap (L_1 \coprod_{F_1} L_2) $ is finitely generated.

Continuing in the same fashion we go up to the HNN Lie algebra $$L = L_1 \coprod_{F_1}  \ldots \coprod_{F_1} L_n = \langle L_1 \coprod_{F_1}  \ldots \coprod_{F_1} L_{n-1}, s_n \ | \ [F_1, s_n] \subseteq F_1 \rangle$$
Then by Proposition \ref{HNN} $Ker(\mu)$ is finitely generated if $Ker(\mu) \cap (L_1 \coprod_{F_1}  \ldots \coprod_{F_1} L_{n-1})$ is finitely generated. Thus $Ker(\mu)$ is finitely generated.
Furthermore by Proposition \ref{HNN} if $Ker(\mu)$ is finitely presented then $Ker(\mu) \cap F_1 = N$ is finitely generated, a contradiction. Hence $Ker(\mu)$ is not finitely presented. Finally set $L_0 = Ker (\mu)$.

\medskip
{\bf Proof of Corollary B1} We will show that we can apply Theorem A and construct an ideal $L_0$ of $L$ of codimension 1 that is finitely generated but not finitely presented. In the case when $F_1$ is free, the fact that $L_0$ is not finitely presented has an alternative proof using homological methods, see Lemma \ref{Euler0}.

Since
 $F_1 \not\subseteq [L,L]$ we choose $N$ to be an  ideal of $F_1$ of codimension 1 such that $[L,L] \cap F_1 \subseteq N$.
Since the grading of $F_1$ assigns the same degree $n_0$ to the elements of a standard generating set of $F_1$ we conclude that any codimension 1 ideal of $F_1$ is homogeneous i.e. is $\mathbb{N}$-graded, in particular $N$ is homogeneous  and furthermore $F_1 = N \oplus K y$, where $y$ is a homogeneous element of degree $n_0$. 

Note that if $F_1$ is free it follows by the main result of \cite{B.Baumslag} that $N$ is not finitely generated. We claim that if $F_1$ is a surface Lie algebra then $N$ is not finitely generated, Then we can apply Theorem A.

 To prove the above claim assume the contrary,  $N$ is finitely generated in the surface case. Then for $Q_0 = N/ [F_1, F_1]$ $$V = [F_1, F_1]/ [[F_1, F_1], [F_1, F_1]]$$  considered as a right $U(Q_0)$-module via the adjoint action is finitely generated.
Set $Q = F_1/ [F_1, F_1]$. Note that  $$F_1 = \langle x_1, \ldots, x_{2n} \ | \ [x_1, x_2] + \ldots + [x_{2n-1}, x_{2n}] = 0 \rangle,$$ hence  $V$ is generated as a $U(Q)$-module by the images $a_{i,j}$ of $[x_i, x_j]$ modulo the relation $a_{1,2} + a_{3,4} + \ldots + a_{2n-1, 2n} = 0$ and the Jacobi relations $a_{i,j} \circ q_k + a_{j,k} \circ q_i + a_{k,i} \circ q_j = 0,$  where $q_i$ is the image of $x_i$ in $Q$, and $a_{i,j} = - a_{j,i}$. Here $\circ$ denotes the adjoint action i.e. $a_{i,j} \circ q_k $ is the image of $[[x_i, x_j], x_k]$ in $V$.

 We can consider $S$ the field of fractions of $U(Q)$ and set $W = V \otimes_{U(Q)} S$. Then using the above relations (Jacobi and antisimmetry) we can express the image $b_{i,j}$ of $a_{i,j}$ in $W$ as element of  the $S$-submodule generated by $b_{1,i}$ and $b_{1,j}$. Thus $W$ is generated as an $S$-module ( this is as a vector space over $S$) by $2n-1$ elements $\{ b_{1,i} \ | \  2 \leq i \leq 2n \}$ modulo only one relation (the defining relation of the surface Lie algebra i.e. $a_{1,2} + a_{3,4} + \ldots + a_{2n-1, 2n} = 0$). Thus $dim_S W \geq 2n -1 -1 = 2n - 2 > 0,$ hence $V$ contains a free $U(Q)$-submodule i.e. a submodule isomorphic to $U(Q)$. Thus if $V$ is finitely generated as a $U(Q_0)$-module, by the Noetherianess of $U(Q_0)$ we deduce that every submodule is finitely generated, in particular $U(Q)$ is finitely generated as a $U(Q_0)$-module ( via the multiplication), a contradiction since for $Q = Q_0 \oplus K q$.

\medskip
{\bf Proof of Corollary B2}

Let $V = (F_2)_1$ the vector space of the elements of $F_2$ of degree 1 plus the zero element. Consider the linear map
$$\theta: F_2 \to End_K(F_1/ [F_1, F_1])$$
induced by the adjoint action of $F_2$ on $F_1$ i.e.
$\theta(w) (f + [F_1, F_1]) = [f,w] + [F_1, F_1],$ where $End_K(F_1/ [F_1, F_1])$ is the vector space of all linear endomorphisms of $F_1/ [F_1, F_1]$ i.e. all linear maps from  $F_1/ [F_1, F_1]$ to  $F_1/ [F_1, F_1]$.
Note that for $n = d(F_1)$ we have that $F_1/ [F_1, F_1]$ is a vector space ( over $K$) of degree $n$, hence $End_K(F_1/ [F_1, F_1]) \simeq M_n(K)$. 
For $m = d(F_2) > d(F_1)^2 = n^2$ we have that $dim_K V = m$, hence $Ker(\theta) \cap V \not= 0$.

Note that every basis of $V$ as a vector space (over $K$) is a free basis of $F_2$. Take $s_1 \in Ker(\theta) \cap (V \setminus \{ 0 \})$ and  $b \in [F_2, F_2] \cap (Ker(\theta) \setminus \{ 0 \}) $ be a homogeneous element. 
Define $\widetilde{F}_2$ to be the Lie subalgebra of $F_2$ generated by $s_1$ and $b$. As a Lie subalgebra of a free one is free and obviously $\widetilde{F}_2$ is not 1 dimensional we conclude that $s_1$ and $b$ is a free basis of $ \widetilde{F}_2$. As the generators $s_1$ and $b$ are homogeneous $\widetilde{F}_2$ is a graded subalgebra of $F_2$.

Consider the Lie algebra $\widetilde{L} = F_1 \leftthreetimes \widetilde{F}_2$. It is generated by homogeneous elements, hence it is a graded Lie subalgebra of $L$. We want to apply Corollary B1 for $\widetilde{L}$ and $n_0 = 1$, and conclude that $\widetilde{L}$ is not coherent, hence $L$ is not coherent.
We need to show that $F_1 \not\subseteq [\widetilde{L}, \widetilde{L}]$ i.e. $F_1 \not\subseteq [\widetilde{L}, \widetilde{L}] \cap F_1  = [F_1, F_1] +
  [F_1, \widetilde{F}_2]$.
  By construction $\widetilde{F}_2 \subseteq Ker(\theta)$, hence $ [F_1, \widetilde{F}_2] \subseteq [F_1, F_1]$. This completes the proof.

\section{More on finitely generated Lie subalgebras}

\begin{proposition} \label{gen1}
Let $0 \to S \to L \to \Gamma \to 0$ be a short exact sequence of $\mathbb{N}$-graded Lie algebras over an infinite field $K$. Suppose that $S$ has a graded ideal $N$ such that 

a) $[L,L] \cap S \subseteq N$,

b) there are homogeneous elements $x$ and $s_1$ of the same degree such that $S = N + k x$ and $s_1 \in \Gamma \setminus [\Gamma, \Gamma]$.

 Then there is an ideal $M$ of $L$ such that $M$ is finitely generated as a Lie algebra, $dim_K (L/ M) = 1$ and $M \cap S = N$. 
\end{proposition}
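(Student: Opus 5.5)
The plan is to reduce to Proposition \ref{sigma-Lie}, applied to the subalgebra generated by $S$ and a lift of $s_1$, and then extend the resulting codimension-one ideal to all of $L$ through $\Gamma$. I will assume, as the context of Theorem A suggests, that $L$ and $S$ are finitely generated as Lie algebras. The statement does not say this explicitly, but both are needed. The finite generation of $S$ is what Proposition \ref{sigma-Lie} requires. The finite generation of $L$ gives that $\Gamma$ is finitely generated. The map $L \to \Gamma$ is taken to be a homomorphism of graded algebras.

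\emph{Step 1 (setting up Proposition \ref{sigma-Lie}).} Choose a homogeneous lift $\tilde s_1 \in L$ of $s_1$ with $\deg \tilde s_1 = \deg s_1 = \deg x$. Put $L_1 = S + K\tilde s_1$. Since $S$ is an ideal, $L_1$ is a graded subalgebra. We have $[L_1,L_1] \subseteq [L,L]\cap S \subseteq N$, so $L_1/N$ is abelian. Moreover $L_1 = N \oplus Kx \oplus K\tilde s_1$: here $x \notin N$, since otherwise $S=N$ and one simply takes the hyperplane argument below directly, and $\tilde s_1 \notin S$ because $s_1 \neq 0$. Applying Proposition \ref{sigma-Lie} with $x_1 = x$, $x_2 = \tilde s_1$ and $M = S$ gives a graded ideal $S_0$ of $L_1$ with $N \subseteq S_0$, $\dim L_1/S_0 = 1$, $S_0 \neq S$, and $S_0$ finitely generated. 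Since $S_0\neq S$, we have $x \notin S_0$.

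\emph{Step 2 (extending to a character of $L$).} First show $[L,L]\cap L_1 \subseteq S_0$. Take an element of $[L,L]\cap L_1$. Its image in $\Gamma$ lies in $[\Gamma,\Gamma]\cap Ks_1$, which is $0$ because $s_1\notin[\Gamma,\Gamma]$. So the element lies in $[L,L]\cap S \subseteq N \subseteq S_0$. By the modular law, $([L,L]+S_0)\cap L_1 = S_0$, so $x \notin [L,L]+S_0$. Choose a hyperplane $M$ of $L$ containing $[L,L]+S_0$ but not $x$. Then $M$ is an ideal, since it contains $[L,L]$, and $\dim L/M = 1$. We get $M\cap L_1 = S_0$, and hence $M\cap S = S_0\cap S = N$, because $N \subseteq M\cap S \subsetneq S$ and $\dim S/N = 1$.

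\emph{Step 3 (finite generation).} Since $x\notin M$ and $x \in S$, we have $M+S = L$, so $M/N \cong \Gamma$. As $\Gamma$ is finitely generated, pick finitely many $m_1,\dots,m_k\in M$ mapping onto generators of $\Gamma$. The subalgebra generated by the finitely many generators of $S_0$ together with $m_1,\dots,m_k$ contains $S_0 \supseteq N = M\cap S$, and it surjects onto $M/(M\cap S)\cong\Gamma$. Hence this subalgebra equals $M$, so $M$ is finitely generated.

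The main obstacle is Step 1, namely verifying the hypotheses of Proposition \ref{sigma-Lie}: the direct sum decomposition, the equal degrees, and above all the finite generation of $S$. If $S$ is not assumed finitely generated, I would instead try to deduce it from the grading, by showing that $S/[S,S]$ is finite dimensional.
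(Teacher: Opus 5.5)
Your proof is correct under the same tacit hypotheses that the paper's own proof uses: $S$ and $\Gamma$ finitely generated, and $\dim_K(S/N)=1$. At the decisive step it takes a genuinely shorter route than the paper.

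Your Steps 1 and 2 agree with the paper. Proposition \ref{sigma-Lie}, applied to $L_1=S+K\tilde s_1$, gives the finitely generated ideal $S_0$. Then $s_1\notin[\Gamma,\Gamma]$ together with $[L,L]\cap S\subseteq N$ lets you extend $S_0$ to a codimension-one ideal $M$ of $L$ with $M\cap L_1=S_0$.

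The difference is in proving that $M$ is finitely generated. The paper lifts a finite homogeneous generating set $s_1,\dots,s_n$ of $\Gamma$ and builds $\widetilde L=L_1\coprod_S\cdots\coprod_S L_n$ mapping onto $L$. It views $\widetilde L$ as an iterated HNN extension with associated subalgebra $S$ and applies Proposition \ref{HNN} a) $n-1$ times to get $Ker(\mu\circ\pi)$ finitely generated. It then pushes this down along $\pi$.

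You instead observe that $N=M\cap S\subseteq S_0\subseteq M$ and $M/N\cong\Gamma$. Hence $M$ is generated by a finite generating set of $S_0$ together with lifts of generators of $\Gamma$. This is valid and avoids amalgamated products, normal forms and Proposition \ref{HNN} entirely. The same observation would also give the finite generation of $L_0$ in Theorem A at once. What the paper's machinery buys is the HNN structure of $Ker(\mu)$. That structure is genuinely needed in Theorem A for part b) of Proposition \ref{HNN}, the failure of finite presentability, but not for Proposition \ref{gen1}.

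Two side remarks.

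\textbf{The case $x\in N$.} Your parenthetical claim that this case can be handled directly is wrong. If $x\in N$ then $S=N$, so $S\subseteq M$ is forced, and $M/S$ would be a finitely generated codimension-one ideal of $\Gamma$. This fails, for example, for $L=Kx\oplus\Gamma$ with $\Gamma$ free of rank $2$, since codimension-one ideals of non-abelian free Lie algebras are not finitely generated (as used in Corollary B1). So $x\notin N$ must be assumed, as the paper tacitly does when it writes $\dim_K(S/N)=1$.

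\textbf{Finite generation of $S$.} Your fallback of deducing this from the grading cannot work. Take $L$ free on $x,s$ of degree $1$, $S$ the ideal generated by $x$, and $N=[L,L]$. All stated hypotheses hold, yet every codimension-one ideal of $L$ fails to be finitely generated. So the finiteness assumptions you added are necessary, not merely convenient.
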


\begin{proof}
Consider a commutative diagram 
$$\xymatrix{S \ \ar@{^{(}->}[r] \ar[d]_{id_S} & \widetilde{L} \ar@{->>}[r]^{} \ar@{->>}[d]_{\pi} & F_n \ar@{->>}[d]^{} \\ S \ \ar@{^{(}->}[r] & L \ar@{->>}[r]^{} & \Gamma}$$
where the lines are short exact sequences of $\mathbb{N}$-graded Lie algebras,
$F_n$ is the free Lie algebra with a homogeneous free basis $s_1, \ldots, s_n$, where  $s_1, \ldots, s_n$ is a homogeneous generating set of $\Gamma$ and the vertical maps are surjective homomorphisms of graded Lie algebras with the most left map being the identity. 
Here $$\widetilde{L} = L_1 \coprod_S L_2 \coprod_S \ldots \coprod_S L_n,$$ where $\coprod_S$ is the amalgamated free product with amalgam $S$ in the category of $\mathbb{N}$-graded Lie algebras, and each $L_i = S \leftthreetimes
 \langle s_i \rangle$
 the subalgebra of $L$ generated by $S$ and $s_i$. The restriction of $\pi$ on $L_i$ is the identity map.

 Note that  $[N, s_1] \subseteq [S, s_1] \subseteq [L_1,L_1] \cap S  \subseteq [L,L] \cap S\subseteq  N,$ hence  $N$ is an ideal of $L_1$ and $[L_1,L_1] = [S + k s_1, S + k s_1] = [S,S] + [S, s_1] \subseteq N$. We have $N \subseteq S \subseteq L_1$ where $dim_k (S/ N) = 1$, this together with the inclusion $[L_1, L_1]\subseteq N$ implies that $L_1 / N $ is an abelian Lie algebra with $dim_K(L_1/ N) = 2$. 

By Proposition \ref{sigma-Lie} there is $S_0$ an ideal of $L_1$ such that $N \subseteq S_0, S_0$ is finitely generated as a Lie algebra, $S_0 \not= S$ and $dim_K(L_1/ S_0) = 1.$
Let $\mu : L \to K$ be a homomorphism of Lie algebras such that $Ker (\mu \circ \pi) \cap L_1 = S_0, \hbox{ i.e. } Ker(\mu) \cap  L_1 = S_0.$ This is possible since  $s_1 \not\in [\Gamma, \Gamma]$ and $[L,L] \cap S \subseteq N \subseteq S_0$. Note that $S \not\subseteq S_0$, hence  $\mu(S) \not= 0$.

Consider the epimorphism of Lie algebras $\chi = \mu  \circ \pi : \widetilde{L} \to K.$ Note that $\chi(S) \not= 0, Ker(\chi) \cap L_1 = S_0$ is finitely generated  and $Ker(\chi) \cap S = S_0 \cap S = N.$ Then we view $L_1 \coprod_S L_2$ as an HNN Lie extension $\langle L_1, s_2 \ | \ [S,{s_2}] \subseteq S \rangle$ with a Lie base $L_1$, associated Lie algebra $S$ and stable letter $s_2$. Then by Proposition \ref{HNN} a) $$Ker(\chi) \cap ( L_1 \coprod_S L_2)\hbox{ is finitely generated }.$$ We view $L_1 \coprod_S L_2 \coprod_S L_3$ as an HNN Lie extension with a base Lie subalgebra $L_1 \coprod_S L_2$, associated Lie subalgebra $S$ and stable letter $s_3$. Then by Proposition \ref{HNN} a) $$Ker(\chi) \cap ( L_1 \coprod_S L_2 \coprod_S L_3)\hbox{ is finitely generated }.$$ Then repeating this argument several times we deduce that $Ker(\chi)$ is finitely generated.
Note that $Ker(\mu)$ is a quotient of $Ker(\chi)$, hence $Ker(\mu)$ is finitely generated. Finally we set $M = Ker(\mu)$.

\end{proof}

\section{An example that the $\mathbb{N}$-graded condition in Proposition \ref{sigma-Lie} is not redundant}

Let $F$ be the free Lie algebra with basis $x,y$ and $N$ be the ideal generated by $x$. By \cite{Sh}, \cite{Witt} a Lie subalgebra of a free one is free. Here it is easy to point a free basis $ \{ a_i \ | \ i \geq 0 \}$ of $N$, where $a_i = [x, y , \ldots , y ]$ where $y$ appears $i$ times, all commutators are left-normed, thus $a_0 = x, [a_i,y] = a_{i+1}.$

\noindent
Consider the Lie algebra $L = F \leftthreetimes \langle s \rangle, \hbox{ where } [a_0, s] = [a_0, a_1], [y,s] = 0$.
Note that since $[y,s] = 0$ we have
$[a_i,s] = [x, y, \ldots, y, s] = [x,s, y, \ldots, y] = [a_0,a_1, y, \ldots, y] =: b_{i+1}$
Since the map $ad(y) :F \to F$ sending $f$ to $[f,y]$ is a derivation we have
$$b_{i+1} = \sum_{0 \leq j \leq i} {i \choose j} [ad(y)^j (a_0), ad^{i-j}(y) (a_1)]= \sum_{0 \leq j \leq i} {i \choose j} [a_j, a_{i+1-j}]$$
Let $M$ be a Lie subalgebra of $L$ that contains $N$ such that $M \not= F$. Then there is some $\lambda \in K$ such that for $t = \lambda y + s$ we have
$M = N \leftthreetimes \langle t  \rangle.$

\begin{lemma} \label{not-redundant} For any $\lambda \in K$ we have that $M$ is not finitely generated. Thus if $I$ is a Lie subalgebra of $L$ such that $dim_K L/ I = 1$, $N \subseteq I$ and $I$ is finitely generated then $I = F$.
\end{lemma}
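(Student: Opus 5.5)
We argue separately according to whether $\lambda = 0$. In both cases we look at $V = N/[N,N]$, which has basis the images $\bar a_i$ of the free generators $a_i$ of $N$. First we record how $t = \lambda y + s$ acts on $N$:
$$[a_i,t] = \lambda a_{i+1} + b_{i+1}, \qquad b_{i+1} = \sum_{j} {i \choose j}[a_j,a_{i+1-j}] \in [N,N].$$
Since $N$ is an ideal of $L$, both $[N,N]$ and $[N,[N,N]]$ are ideals of $M = N \leftthreetimes \langle t \rangle$.

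\emph{Case $\lambda = 0$.} Here $[N,t] \subseteq [N,N]$, so $[M,M] = [N,N] + [N,t] \subseteq [N,N]$. Hence $M/[M,M]$ surjects onto $V$, which is infinite dimensional. Therefore $M$ is not finitely generated.

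\emph{Case $\lambda \neq 0$.} The abelianization of $M$ is now finite dimensional, so we pass to the class-two quotient $P = M/[N,[N,N]]$. Write $\bar N = N/[N,[N,N]]$. This is the free nilpotent algebra of class $2$ on $V$, so as a vector space $\bar N = V \oplus \Lambda^2 V$, with $\Lambda^2 V = [N,N]/[N,[N,N]]$ central in $\bar N$. The element $t$ acts on $\bar N$ by a derivation $D$ with
$$D(\bar a_i) = \lambda \bar a_{i+1} + \bar b_{i+1}, \qquad D([u,v]) = [Su,v] + [u,Sv] \ \text{ on } \Lambda^2 V,$$
where $S$ is $\lambda$ times the shift $\bar a_i \mapsto \bar a_{i+1}$. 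On $\Lambda^2V$ the $b$-terms disappear, because $[\bar b, \cdot] = 0$ in $\bar N$. Identify $V$ with $K[u]$ through $\bar a_i \leftrightarrow u^i$, so that $S$ becomes multiplication by $\lambda u$. Then $\Lambda^2 V$ is the module of antisymmetric polynomials in $K[u_1,u_2]$, on which $D$ acts as multiplication by $\lambda(u_1+u_2)$. This module is not finitely generated over $K[D]$: it is torsion-free of infinite rank, since $K[u_1,u_2]$ has Krull dimension $2$ while $K[u_1+u_2]$ has dimension $1$.

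Now suppose $M$ is finitely generated. Then $P$ is generated by finitely many elements, and we may take them to be $t$ together with $g_1,\ldots,g_r \in \bar N$. Let $H$ be the subalgebra they generate. The key claim is that $H \cap \Lambda^2 V$ lies in the $K[D]$-submodule generated by the finitely many elements
$$[D^p g_k, D^q g_l] \ \ (p,q \leq c), \qquad \text{the $\Lambda^2V$-components of the } D^p g_k \ \ (p \leq c),$$
for some constant $c$. To prove it, write every element of $H$ as a $K$-linear combination of $t$, iterated $D$-images $D^p g_k$, and their brackets. Brackets of elements of $\bar N$ depend only on their $V$-components, and the $V$-component of $D^p g_k$ is $S^p$ applied to the $V$-component of $g_k$. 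The Leibniz rule $D[x,z] = [Dx,z] + [x,Dz]$ then lets us reduce to a bounded range of exponents $p,q$ modulo the $K[D]$-action, for instance by working with a filtration by $u$-degree. This shows $H \cap \Lambda^2V$ is a finitely generated $K[D]$-module. It is a proper submodule of $\Lambda^2 V$, which is not finitely generated over $K[D]$, so $H \neq P$, a contradiction. This bookkeeping step, that the part of $H$ lying in $\Lambda^2V$ is finitely generated over $K[D]$ despite the twisting terms $\bar b_{i+1}$, is where I expect the main care to be needed. If it becomes unwieldy, the fallback is to argue directly with the $u$-degree filtration on $\Lambda^2 V$.

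For the second sentence of the lemma, let $I$ be a subalgebra with $N \subseteq I$, $\dim_K L/I = 1$ and $I \neq F$. Then $I/N$ is a one-dimensional subspace of $L/N = Ky \oplus Ks$ different from $Ky$, so it is spanned by $\lambda y + s$ for some $\lambda \in K$. Thus $I$ is one of the algebras $M$ considered above, and so $I$ is not finitely generated. Consequently a finitely generated $I$ with these properties must equal $F$.
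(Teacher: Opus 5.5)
Your case $\lambda = 0$ (the abelianization of $M$ surjects onto $N/[N,N]$) and your reduction of the second sentence to the first are correct, and they agree with the paper. The case $\lambda \neq 0$ has a genuine gap, and it cannot be closed along your lines.

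\textbf{The class-two quotient is finitely generated.} The quotient $P = M/[N,[N,N]]$ is in fact generated by $t$ and $\bar a_0$. Since $\Lambda^2 V$ is central in $\bar N$ and $D$-stable, you have $D^p(\bar a_0) = \lambda^p \bar a_p + w_p$ with $w_p \in \Lambda^2 V$. Hence $[D^p\bar a_0, D^q\bar a_0] = \lambda^{p+q}[\bar a_p,\bar a_q]$. As $\lambda \neq 0$, these brackets span $\Lambda^2 V$. Then $\bar a_p = \lambda^{-p}(D^p\bar a_0 - w_p)$ also lies in the subalgebra generated by $t$ and $\bar a_0$, so that subalgebra is all of $P$.

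\textbf{Where the key claim fails.} Your claim is already false for $g_1 = \bar a_0$. There $H \cap \Lambda^2 V = \Lambda^2 V$, which you yourself show is not finitely generated over $K[D]$. The flaw is in the Leibniz bookkeeping. The identity $D[x,z] = [Dx,z]+[x,Dz]$ lets you rewrite $[D^p g_k, D^q g_l]$ as $\pm [g_k, D^{p+q} g_l]$ modulo $D$-multiples, but nothing bounds $p+q$. In your polynomial model, the span of all $[D^p g_k, D^q g_l]$ is stable under the symmetric polynomials $K[u_1,u_2]^{S_2}$, including multiplication by $u_1u_2$. It is not a finitely generated $K[u_1+u_2]$-module. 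The same descending induction along the lower central series shows that $M/\gamma_c(N)$ is generated by $t$ and the image of $a_0$ for every $c$. So no quotient of $M$ that is nilpotent on $N$ can detect the failure of finite generation.

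\textbf{What is needed.} You need a non-nilpotent quotient in which $a_0$ acts invertibly. The paper kills $t$ together with all $[a_i,a_j]$ for $1 \le i < j$. In $b_{i+1}$ only the summand $[a_0,a_{i+1}]$ survives, so the relation $[a_i,t] = \lambda a_{i+1} + b_{i+1}$ becomes $[\bar a_0, \bar a_{i+1}] = -\lambda \bar a_{i+1}$. Concretely, $a_i \mapsto e_i$, $t \mapsto 0$ defines a surjection of $M$ onto $\bigl(\bigoplus_{i \ge 1} K e_i\bigr) \leftthreetimes K e_0$. Here the ideal is abelian and $e_0$ acts on it as the nonzero scalar $-\lambda$, and the relations are checked on generators. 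In this algebra every finitely generated subalgebra is finite dimensional, so it is not finitely generated, and hence neither is $M$.
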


\begin{proof}

If $\lambda = 0$ then consider the Lie algebra $L_0 = M/[N,N]$.
 Note that  the adjoint action of $t$ on $N/ [N,N]$ is the trivial one i.e. acts as the zero map. Note that  $J = N/ [N,N]$ as a vector space over $K$ has an infinite basis  $\{ a_i + [N,N] \ | \ i \geq 0 \}$ and $L_0/ J $ is one dimensional over $K$ with generator $t + J$. Thus $J$ is not finitely generated as $U(L_0/J)$-module, hence $L_0$ is not finitely generated. Then $M$ is not finitely generated.
 
 Suppose that $\lambda \not= 0$.  Note that
 $[a_0, t] = [a_0, \lambda y + s] = [a_0, \lambda y] + [a_0, s] = 
 \lambda a_1 + [a_0, a_1]$
 and in general since $[t,y] = 0$ we have
 $$[a_i, t] = ad(t)(ad(y)^i(a_0)) =  ad(y)^i ad(t) (a_0) = ad(y)^i ( \lambda a_1 + [a_0, a_1]) =$$ $$ \lambda a_{i+1} + ad(y)^i([a_0, a_1]) = \lambda a_{i+1} + b_{i+1} = \lambda a_{i+1}  + [a_0, a_{i+1}] + \sum_{1 \leq j \leq i} {i \choose j} [a_j, a_{i+1-j}] $$
Let $L_1$ be the quotient of $M$ obtained by killing the ideal generated by $\{t, [a_i, a_j] \ | \ 1 \leq i < j \}$. Then by the above relation we get that $L_1$ has the following presentation as a Lie algebra
$$
L_1 = \langle a_0, \ldots , a_i, \ldots \ | \ \lambda a_{i+1} + [a_0, a_{i+1}]   \hbox{ for } i \geq 0,  [a_i, a_j]  \hbox{ for } 1 \leq i < j  \}$$
Thus writing $\bar{a}_i$ for the image of $a_i$ in $L_1$  we have
\begin{equation} \label{eq}  - \lambda \bar{a}_{i+1} = [\bar{a}_0, \bar{a}_{i+1}] \end{equation}
Thus
$[L_1, L_1]$ is an abelian ideal of $L_1$ (hence $L_1$ is metabelian), $[L_1, L_1]$ as a vector space (over $K$) has a basis $\{ \bar{a}_i \ | \ i \geq 1 \}$ and $L_1 / [L_1, L_1]$ is one dimensional ( over $K$) with a generator the image of $\bar{a}_0$. By (\ref{eq}) the adjoint action of $\bar{a}_0$ is multiplication by $ \lambda$, hence $[L_1, L_1]$ is infinitely generated as a module over the universal algebra of $L_1/ [L_1, L_1]$. Then $L_1$ is not finitely generated, hence $M$ is not finitely generated.
\end{proof}

Note that it is impossible that the Lie algebras $M$ and $L$ are $\mathbb{N}$ graded, $x,y,s$ are homogeneous and $deg(y) = deg(s)$. Otherwise $a_0, a_1$ are homogeneous,  $deg(a_0) + deg(s) = deg ([a_0, s]) = deg ([a_0, a_1]) = deg(a_0) + deg (a_1)$ hence
$deg(y) = deg(s) = deg(a_1) = deg ([x,y]) = deg(x) + deg(y), \hbox{ hence } deg(x) = 0$
a contradiction.

\section{Examples of coherence} \label{sec-ex}

Let $L$ be an arbitrary Lie algebra over a field $K$. Consider the complex
$$
 \ldots \to \wedge^n L \mapnew{d_n} \wedge^{n-1} L \mapnew{d_{n-1}} \ldots \to L \to  K \to 0
$$
defined with the differentials
$$
d_n( x_1 \wedge \ldots \wedge x_n ) =
\sum_{1 \leq i < j \leq n}(-1)^{i+j} [x_i, x_j] \wedge x_1 \wedge \ldots \widehat{x_i} \ldots \widehat{x_j} \ldots \wedge x_n $$
The $n$th-homology of $L$ is
$
H_n(L,K) = Ker (d_n) / Im (d_{n+1}).
$

Let $0 \to N \to L \to Q \to 0$ be a short exact sequence of Lie algebras.
As for groups there is Lyndon-Hoschild-Serre spectral sequence
for any $L$-module $W$
$$E^2_{p,q} = H_p(Q, H_q(N, W))$$ that converges to $H_{p+q} (L, W)$.

\begin{lemma}
Let $L = Q \leftthreetimes F$ be a Lie algebra, where $F$ and $Q$ are finitely generated free Lie algebras and $dim_K Q = 1$. Then $L$ is coherent.
\end{lemma}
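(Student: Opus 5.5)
The plan is to reduce to showing that a finitely generated subalgebra is an HNN extension, in the sense of Corollary \ref{shirshov}, with finitely generated free base and associated subalgebra. Write $Q = Kq$, so that $F$ is an ideal of $L$ and $L/F \simeq K$. Let $H$ be a finitely generated subalgebra of $L$ and set $N = H \cap F$. By the Shirshov--Witt theorem \cite{Sh}, \cite{Witt}, $N$ is a free Lie algebra.

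If $H \subseteq F$, then $H = N$ is a finitely generated free Lie algebra and hence finitely presented. Otherwise $H/N \simeq K$, generated by the image of some $h = \mu q + f_0 \in H$ with $\mu \not= 0$. Then $N$ is an ideal of $H$, $H = N \oplus Kh$ as vector spaces, and $ad(h)$ restricts to a derivation $d$ of $N$. Thus $H$ is the HNN extension $\langle N, h \ | \ [b,h] = d(b), b \in N\rangle$ with base and associated subalgebra both equal to $N$. The task is to replace $N$ by finitely generated pieces.

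Choose a finite generating set of $H$ of the form $h, n_1, \ldots, n_r$ with $n_i \in N$ (possible after subtracting multiples of $h$). Define $A_0 = \langle n_1, \ldots, n_r\rangle$ and $A_{k+1} = \langle A_k, d(A_k)\rangle$. Each $A_k$ is a finitely generated free subalgebra of $N$, and $N = \bigcup_k A_k$, because $N$ is the ideal of $H$ generated by the $n_i$ and $h$ acts by $d$. For each $k$ form
$$G_k = \langle A_{k+1}, h \ | \ [a,h] = d(a), a \in A_k \rangle .$$
Since $A_k$ is finitely generated and $d$ is a derivation, it suffices to impose the relation on generators of $A_k$. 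Hence $G_k$ is finitely presented: $A_{k+1}$ is free and finitely generated, and there are finitely many extra relators. There is an obvious epimorphism $G_k \to H$. So the goal becomes: \emph{for some $k$ this epimorphism is injective}.

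By Corollary \ref{product} and the normal form of Corollary \ref{shirshov}, the kernel $M_k$ of $G_k \to K$ (the map sending $h \mapsto 1$ and $A_{k+1} \mapsto 0$) is an iterated amalgam of shifted copies of $A_{k+1}$ along copies of $A_k$, glued via $d$. The map $M_k \to N$ is injective exactly when these amalgamations already happen inside $N$, that is, when
$$\langle A_{k+1}, d(A_{k+1})\rangle = A_{k+1} \coprod_{A_k} d\text{-shift}(A_{k+1})$$
holds in $N$ for the relevant pairs, together with the stabilization $A_{j+1} = A_j \coprod_{A_{j-1}} (\ldots)$ for $j > k$.

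This injectivity step is where I expect the real difficulty: Lie subalgebras of free Lie algebras need not satisfy any Howson- or Hanna Neumann-type rank bound. My first attempt will be homological. Using the Lyndon--Hochschild--Serre spectral sequence for $0 \to N \to H \to K \to 0$, with $H_q(N, -) = 0$ for $q \ge 2$ since $N$ is free, one gets the Wang-type exact sequence
$$0 \to H_2(H,K) \to H_1(N,K) \xrightarrow{ad(h)} H_1(N,K) \to H_1(H,K) \to K \to 0 .$$
The same sequence holds for each $G_k$ with $N$ replaced by $M_k$. I will show that the kernels $\ker(G_k \to G_{k+1})$ eventually vanish by comparing these sequences. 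The key input is that $H_1(N,K) = N/[N,N]$ is a module over the principal ideal domain $K[ad(h)]$ which is finitely generated, since $H$ is finitely generated. Hence its $ad(h)$-kernel is finite dimensional and its torsion stabilizes. If this comparison cannot be pushed through in the ungraded setting, I will fall back on the graded case, where Weigel's criterion (finite presentability iff $\dim_K H_i < \infty$ for $i \le 2$) together with the displayed sequence gives the result directly.
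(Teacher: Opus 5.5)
\textbf{There is a genuine gap, and it starts with a misreading of the notation.} You have read the semidirect sum the wrong way round.

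In this paper $A \leftthreetimes B$ always means that the first factor $A$ is the ideal and $B$ acts on it. Compare Theorem A, where $L_i = F_1 \leftthreetimes \langle s_i \rangle$ with $s_i$ acting on $F_1$, or Lemma \ref{Euler0}, where $0 \to F_1 \to L \to F_2 \to 0$. So in $L = Q \leftthreetimes F$ the one-dimensional $Q$ is the ideal, and $L/Q \simeq F$ is free.

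With this reading the lemma is short, and the paper's argument runs as follows. Let $L_0$ be a finitely generated subalgebra and put $Q_0 = L_0 \cap Q$.
\begin{enumerate}
\item $\dim_K Q_0 \le 1$, so $Q_0$ is finitely presented.
\item $L_0/Q_0 \simeq (L_0+Q)/Q$ is a finitely generated subalgebra of $L/Q \simeq F$. By Shirshov--Witt it is free of finite rank, hence finitely presented.
\item A Lie algebra with a finitely presented ideal and a finitely presented quotient is finitely presented. Take generators of $Q_0$ and lifts $\tilde x$ of generators of $L_0/Q_0$. Impose the relations of $Q_0$, relations expressing each $[\tilde x, q]$ as an element of $Q_0$, and the lifted relations of $L_0/Q_0$ set equal to elements of $Q_0$.
\end{enumerate}

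The configuration you set up instead has $F$ as the ideal and $Q = Kq$ acting on it. That is exactly the ungraded Conjecture stated in the introduction. The paper leaves it open and proves it only under an $\mathbb{N}$-grading, in Proposition \ref{coherent0}.

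Your proposal does not settle that conjecture either.
\begin{itemize}
\item The decisive claim, that $G_k \to H$ is injective for some $k$, is only announced. The ``comparison of Wang sequences'' is not actually carried out.
\item What the Wang sequence does give is $\dim_K H_1(H,K) < \infty$ and $\dim_K H_2(H,K) = \dim_K \ker\bigl(ad(h)|_{N/[N,N]}\bigr) < \infty$.
\item Passing from finiteness of $H_1$ and $H_2$ to finite presentability is Weigel's criterion, which requires an $\mathbb{N}$-grading. In the ungraded setting you have no such implication available.
\item Your fallback to the graded case therefore proves Proposition \ref{coherent0}, not the statement you were aiming at.
\item The description of $M_k$ as an iterated amalgam of shifted copies of $A_{k+1}$ along copies of $A_k$ is also asserted by analogy with the group case rather than proved for Lie algebras.
\end{itemize}

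The fix is to reread the notation. Once $Q$ is the ideal, the elementary extension argument above suffices.
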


\begin{proof}
Let $L_0$ be a finitely generated  Lie subalgebra of $L$. Then $L_0 \simeq Q_0 \leftthreetimes F_0$, where $Q_0 = Q \cap L_0$ and $F_0 \simeq L_0/ Q_0 \simeq (L_0 + Q)/ Q \leq L/ Q \simeq F.$ Thus $F_0$ is finitely generated and free Lie algebra, hence finitely presented. Note that $dim_K Q_0 \leq 1$, hence $Q_0$ is finitely presented. An extension of a finitely presented Lie algebra by a finite presented one is finitely presented, so $L_0$ is finitely presented.
\end{proof}

Let $L$ be a $\mathbb{N}$-graded Lie algebra. We say that it is graded coherent if every graded Lie subalgebra is coherent.

\begin{proposition} \label{coherent0}
Let $L = F \leftthreetimes Q$ be a $\mathbb{N}$-graded Lie algebra with $F$ finitely generated free and $dim_K Q = 1$. Then $L$ is graded coherent.
\end{proposition}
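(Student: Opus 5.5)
Let $L_0$ be a finitely generated $\mathbb{N}$-graded Lie subalgebra of $L$. The goal is to show that $L_0$ is finitely presented. Since $L_0$ is graded, Weigel's criterion (recalled in the introduction) says it suffices to show $\dim_K H_1(L_0,K)<\infty$ and $\dim_K H_2(L_0,K)<\infty$. The first holds because $L_0$ is finitely generated, so only $H_2$ needs work.

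The plan is to use the Lyndon--Hochschild--Serre spectral sequence for $0\to F_0\to L_0\to L_0/F_0\to 0$, where $F_0=F\cap L_0$. The quotient $L_0/F_0$ embeds in $L/F\simeq Q$, so $\dim_K(L_0/F_0)\le 1$. There are two cases.

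\emph{Case $L_0\subseteq F$.} Then $L_0$ is a subalgebra of a free Lie algebra, hence free by Shirshov--Witt, and being finitely generated it is finitely presented.

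\emph{Case $\dim_K(L_0/F_0)=1$.} Write $L_0=F_0\oplus Kq$ as vector spaces. $F_0$ is free (Shirshov--Witt), so $H_q(F_0,K)=0$ for $q\ge 2$. The quotient is one-dimensional, so $H_p(L_0/F_0,-)=0$ for $p\ge 2$. The spectral sequence therefore collapses to
$$H_2(L_0,K)\simeq H_1(L_0/F_0,H_1(F_0,K)),$$
which is the kernel of the action of $q$ on $V=F_0/[F_0,F_0]$, i.e.\ the invariants $V^{q}$. (With the paper's conventions, for a one-dimensional Lie algebra $Kq$ one has $H_1(Kq,V)=\ker(q\colon V\to V)$.) So the task is to show $\ker(\mathrm{ad}\, q)$ on $V$ is finite dimensional.

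The grading is what makes this work. I would arrange $q$ homogeneous: if $F_0$ is a graded subspace, $L_0/F_0$ is graded and $q$ can be taken homogeneous of some degree $d$. Then $V$ is a graded $K[q]$-module. Since $L_0$ is finitely generated, $H_1(L_0,K)=V/Vq\oplus Kq$ up to the image of $q$, so $V/Vq$ is finite dimensional. By the graded Nakayama lemma, $V$ is a finitely generated graded $K[q]$-module, i.e.\ $V\simeq K[q]^r\oplus T$ with $T$ a finite-dimensional torsion part. The kernel of multiplication by $q$ on $V$ lies in $T$, hence is finite dimensional, and so $\dim_K H_2(L_0,K)<\infty$.

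The step I expect to need the most care is the gradedness of $F_0$. The statement deliberately does not assume $F$ or $Q$ are graded subalgebras, so $F_0=F\cap L_0$ need not be graded and the grading argument above may not apply directly. To handle this I would first try replacing $F$ by the ideal $[L,L]+F$, or by a graded codimension-one ideal containing $F$, and argue that it is still free. Failing that, I would work with the associated graded construction. An alternative is to note that $L_0$ finitely generated graded implies its degree-wise spaces are finite dimensional, so $V$ is a locally finite module and $\ker q$ can be bounded degree by degree together with a finite-generation argument over $K[q]$ obtained from $\dim H_1(L_0)<\infty$.
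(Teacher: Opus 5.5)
\textbf{Verdict: there is a genuine gap, but it is easy to close.} Your architecture is the paper's: the case $L_0\subseteq F$, the collapse of the LHS spectral sequence to $H_2(L_0,K)\simeq H_1(Q_0,F_0/[F_0,F_0])=\ker(q)$ on $V$, and the structure theorem over the PID $K[q]$.

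\textbf{The gap.} You never establish that $V=F_0/[F_0,F_0]$ is a finitely generated $K[q]$-module, which is the step you flag yourself. Your graded Nakayama argument needs $V$ to be graded with $q$ homogeneous. Without a grading, $\dim_K V/Vq<\infty$ does not give finite generation; for example $V=K[q,q^{-1}]$ has $V/Vq=0$. Since $F$ is not assumed to be graded, $F_0=F\cap L_0$ need not be graded.

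The repairs you list do not work as stated:
\begin{itemize}
\item $L/F$ is abelian, so $[L,L]\subseteq F$ and hence $[L,L]+F=F$; nothing changes.
\item $F$ already has codimension one, so the only codimension-one ideal containing $F$ is $F$ itself.
\item The associated-graded and ``degree by degree'' ideas are not carried out, and it is unclear what they would mean for a non-graded $V$.
\end{itemize}

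\textbf{The missing idea.} No grading is needed for this step: finite generation of $V$ follows directly from finite generation of $L_0$.
\begin{itemize}
\item Let $g_1,\dots,g_k$ generate $L_0$ and write $g_i=f_i+c_iq$ with $f_i\in F_0$, $c_i\in K$. Then $f_1,\dots,f_k,q$ also generate $L_0$.
\item Let $I$ be the ideal of $L_0$ generated by the $f_i$. Then $L_0/I$ is generated by the image of $q$, so $L_0=I+Kq$. Since $I\subseteq F_0$ and $Kq\cap F_0=0$, this forces $I=F_0$.
\item Hence $F_0$ is spanned by left-normed brackets $[f_i,x_1,\dots,x_m]$ with $x_j\in\{f_1,\dots,f_k,q\}$. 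Modulo the ideal $[F_0,F_0]$, only the brackets $[f_i,q,\dots,q]$ survive.
\item So $V$ is generated over $K[q]$ by the images of $f_1,\dots,f_k$.
\end{itemize}
This is exactly the paper's step: $L_0$ finitely generated and $Q_0$ finitely presented imply that $F_0$ is finitely generated as an ideal, so its abelianization is a finitely generated $U(Q_0)$-module.

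With this in place, the rest of your argument goes through unchanged. The grading of $L_0$ is then used only once, for Weigel's criterion that $\dim_K H_2(L_0,K)<\infty$ implies finite presentability.
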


\begin{proof}
Let $L_0$ be a graded finitely generated Lie subalgebra of $L$. If $L_0$ is a subalgebra of $F$ then it is free, hence is finitely presented.

If not $L_0 \simeq F_0 \leftthreetimes Q_0$, where $F_0$ is free and $dim_K Q_0 = 1$.
We aim to show that $L_0$ is finitely presented, and since $L_0$ is graded it is equivalent to $dim_K H_2(L_0, K) < \infty$.
Consider the spectral sequence $E_{i,j}^2 = H_i(Q_0, H_j(F_0, K)).$ Since both $F_0$ and $Q_0$ are free we have that $E_{i,j}^2 = 0$ for $i \geq 2$ or $j \geq 2$. Then the spectral sequence colapces i.e. all differentials are zero and $$H_2(L_0, K) \simeq E_{1,1}^2 = H_1(Q_0, H_1(F_0, K)) = H_1(Q_0, F_0/ [F_0, F_0])$$
Note that since $L_0$ is finitely generated and $Q_0$ is finitely presented we have that $F_0$ is finitely generated as an ideal of $L_0$. Hence the abelianization $F_0/ [F_0, F_0]$ is finitely generated as a $U(Q_0)$-module via the adjoint action of $Q_0$. Furthermore since $Q_0$ is one dimensional we have that
$ H_1(Q_0, F_0/ [F_0, F_0]) \simeq H^0(Q_0,  F_0/ [F_0, F_0]).$
Note that $F_0/ [F_0, F_0]$ is a finitely generated $U(Q_0)$-module and $U(Q_0)$ is a polynomial ring in one variable with coefficients in the field $K$, hence is PID. Then $F_0/ [F_0, F_0] \simeq U(Q_0)^m \oplus V$
where $V$ is a finite dimensional submodule and
$H^0(Q_0,  F_0/ [F_0, F_0]) \simeq H^0(Q_0, U(Q_0)^m \oplus V) \simeq $ $ H^0(Q_0, U(Q_0)^m) \oplus H^0(Q_0, V) = H^0(Q_0, V) \subseteq V.$
In particular
$dim_K H^0(Q_0,  F_0/ [F_0, F_0])  \leq dim_K V < \infty.$ Hence $H_2(L_0, K)$ is finite dimensional, so $L_0$ is finitely presented.
\end{proof}

\section{Aplications of Euler characteristic} \label{secEuler}

Let $L$ be a Lie algebra of finite cohomological dimension $cd(L) = n $ and such that for all $i $ and for all finite dimensional $U(L)$-modules $W$ we have $dim_K H_i(L, W) < \infty$. We call such Lie algebras homologically finitary. \iffalse{Note that this condition is satisfied if $L$ is of homological type $FP_{\infty}$.
Indeed since $L$ is of type $FP_{\infty}$ there is a projective resolution
$ \mathcal{P} : 0 \to P_n \to \ldots \to P_1 \to P_0 \to K \to 0$
such that all $P_i$ are finitely generated $U(L)$-modules. Then
$H_i(L, W) = Tor_{i}^{U(L)} (K, W) = H_i (\mathcal{P}^{del} \otimes_{U(L)} W),$
where upper index $del$ means deleted resolution. Note that
\iffalse{$$\mathcal{R} = \mathcal{P}^{del} \otimes_{U(L)} W : 0 \to R_n \to \ldots \to R_1 \to R_0 \to 0,$$
where $R_i = P_i \otimes_{U(L)} W$. Since $P_i$ is finitely generated as a $U(L)$-module and}\fi since $W$ is finite dimensional we conclude that each $P_i \otimes_{U(L)} W$ is finite dimenional. Thus $H_i (\mathcal{P}^{del} \otimes_{U(L)} W)$ is finite dimensional.}\fi
By definition the Euler characteristic of $L$ is defined by
$$
\chi(L) = \sum_{0 \leq i \leq cd(L)} (-1)^i \dim_K H_i(L, K).$$

\begin{lemma} Let $0 \to L_1 \to L \to L_2 \to 0$ be  a short exact sequence of Lie algebras, where $L_1$ and $L_2$ are homologically finitary. Then $L$ is homologically finitary.
\end{lemma}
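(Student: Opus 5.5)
The plan is to use the Lyndon--Hochschild--Serre spectral sequence for $0 \to L_1 \to L \to L_2 \to 0$, with coefficients in an arbitrary finite dimensional $U(L)$-module $W$:
$$E^2_{p,q} = H_p(L_2, H_q(L_1, W)) \Longrightarrow H_{p+q}(L, W).$$
Two properties have to be checked: that $cd(L)$ is finite, and that $dim_K H_i(L,W) < \infty$ for all $i$.

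For finiteness of the homology, first restrict $W$ to $L_1$. It is still a finite dimensional $U(L_1)$-module, so each $H_q(L_1, W)$ is finite dimensional because $L_1$ is homologically finitary. Next, $H_q(L_1,W)$ carries an action of $L$ induced by the adjoint action of $L$ on $L_1$ together with the action on $W$. On the Chevalley--Eilenberg complex $\wedge^\bullet L_1 \otimes W$, the action of $L_1$ is null-homotopic on homology by the Cartan homotopy formula $\theta_x = d\iota_x + \iota_x d$. So the action factors through $L_2$, and $H_q(L_1,W)$ becomes a finite dimensional $U(L_2)$-module. Applying the hypothesis on $L_2$ then gives $dim_K E^2_{p,q} < \infty$ for all $p,q$. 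Moreover $E^2_{p,q}=0$ for $q > cd(L_1)$, since homology vanishes above the cohomological dimension (homological dimension is at most $cd$), and likewise $E^2_{p,q}=0$ for $p > cd(L_2)$. Thus only finitely many terms are nonzero, and each $H_n(L,W)$ has a finite filtration whose quotients are subquotients of the finite dimensional $E^2_{p,q}$ with $p+q=n$. Hence $dim_K H_n(L,W)<\infty$.

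For the cohomological dimension, I would use the analogous cohomological spectral sequence $H^p(L_2, H^q(L_1, V)) \Rightarrow H^{p+q}(L,V)$ for an arbitrary $U(L)$-module $V$. It vanishes when $p+q > cd(L_1)+cd(L_2)$, which gives $cd(L) \le cd(L_1)+cd(L_2) < \infty$. An alternative route is to note that $U(L)$ is free over $U(L_1)$ by PBW, with $U(L)\otimes_{U(L_1)}K \cong U(L_2)$, and then use the standard subadditivity argument.

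The main obstacle is justifying that the $L$-action on $H_q(L_1,W)$ descends to $L_2$ and that the spectral sequence is valid with these module structures. Both are standard (Hochschild--Serre), so I would cite them rather than reprove them. Everything else is bookkeeping with finitely many finite dimensional terms.
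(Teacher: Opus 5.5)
Your proposal is correct and follows essentially the same route as the paper: you take the Lyndon--Hochschild--Serre spectral sequence $E^2_{p,q}=H_p(L_2,H_q(L_1,W))$, use that $H_q(L_1,W)$ and then $E^2_{p,q}$ are finite dimensional, and conclude that $H_n(L,W)$ is finite dimensional from its finite filtration by subquotients of the $E^2_{p,q}$. You also check two things the paper's proof leaves implicit: that $H_q(L_1,W)$ is a genuine $U(L_2)$-module, and that $cd(L)\le cd(L_1)+cd(L_2)<\infty$, which is part of the definition of homologically finitary.
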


\begin{proof} Let $W$ be a finite dimensional $U(L)$-module.
Consider the LHS spectral sequence
$E^2_{p,q} = H_p(L_2, H_q(L_1, W))$ that converges to $H_{p+q} (L, W)$.
Since $L_1$ is homologically finitary we have that $ dim_K  H_q(L_1, W) < \infty$. Then using that $L_2$ is homologically finitary we conclude that 
$dim_K E_{p,q}^2 < \infty$. Since $E^{\infty}_{p,q}$ is a subquotient of $E_{p,q}^2$ we conclude that $E_{p,q}^{\infty}$ is finite dimensional, hence each $H_{p+q}(L, W)$ is finite dimensional.
\end{proof}

\begin{theorem} Let $0 \to L_1 \to L \to L_2 \to 0$ be  a short exact sequence of Lie algebras, $L_1$ is homologically finitary  and the trivial $U(L_2)$-module $K$ has a free resolution of finite length with each free module finitely generated.
Then
$\chi(L) = \chi(L_1) \chi(L_2)$.
\end{theorem}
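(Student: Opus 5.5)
We use the Lyndon--Hochschild--Serre spectral sequence $E^2_{p,q} = H_p(L_2, H_q(L_1, K))$ converging to $H_{p+q}(L,K)$, together with the finite free resolution of $K$ over $U(L_2)$. Euler characteristics are additive along spectral sequences once everything is finite dimensional and bounded. The plan is to show
$$\chi(L) = \sum_{p,q} (-1)^{p+q} \dim_K E^2_{p,q},$$
and then identify the right-hand side with $\chi(L_1)\chi(L_2)$.

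First, the finiteness conditions. Let $0 \to P_m \to \ldots \to P_0 \to K \to 0$ be a free resolution of the trivial $U(L_2)$-module with $P_i \simeq U(L_2)^{r_i}$. Then $cd(L_2) \leq m$, and for every finite dimensional $U(L_2)$-module $W$ the complex $P_\bullet \otimes_{U(L_2)} W$ has terms $W^{r_i}$. Hence each $H_p(L_2, W)$ is finite dimensional, so $L_2$ is homologically finitary. Taking Euler characteristics of this complex gives
$$\sum_p (-1)^p \dim_K H_p(L_2, W) = \sum_i (-1)^i r_i \dim_K W = \chi(L_2)\dim_K W,$$
where the last equality comes from the case $W = K$, which shows $\chi(L_2) = \sum_i (-1)^i r_i$. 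This dimension-multiplicativity is the key point.

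Next, apply this with $W = H_q(L_1, K)$. This is a finite dimensional $L_2$-module because $L_1$ is homologically finitary, and it vanishes for $q > cd(L_1)$. So $E^2$ is a finite grid, with $0 \le p \le m$ and $0 \le q \le cd(L_1)$, of finite dimensional spaces, and
$$\sum_p (-1)^p \dim E^2_{p,q} = \chi(L_2)\dim H_q(L_1,K).$$
Summing over $q$ with sign $(-1)^q$ gives $\chi(L_1)\chi(L_2)$.

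It remains to show that the left-hand side equals $\chi(L)$. Each differential $d^r$ has total degree $-1$. Passing from $E^r$ to $E^{r+1}$ takes homology of a bounded complex of finite dimensional spaces, graded by total degree, so the alternating sum $\sum (-1)^{p+q}\dim E^r_{p,q}$ does not change. Since the grid is bounded, $E^\infty = E^r$ for $r$ large. Each $H_n(L,K)$ has a finite filtration with quotients $E^\infty_{p,q}$, $p+q=n$, so $\sum_n (-1)^n \dim H_n(L,K) = \sum (-1)^{p+q}\dim E^\infty_{p,q}$. Also $cd(L) \le cd(L_1)+cd(L_2) < \infty$, since $H_n(L,-)$ vanishes above that range by the same spectral sequence with arbitrary coefficients. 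Hence $\chi(L)$ is defined and equals $\chi(L_1)\chi(L_2)$.

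The main obstacle is the step $\sum_p(-1)^p\dim H_p(L_2,W)=\chi(L_2)\dim W$ for nontrivial $W$. It requires the resolution to be free, not merely projective, so that $P_i\otimes W\simeq W^{r_i}$ has dimension exactly $r_i\dim W$. This is exactly why the hypothesis is imposed on $L_2$.
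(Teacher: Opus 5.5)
Your proposal is correct and follows the paper's proof essentially step for step. Both use the Lyndon--Hochschild--Serre spectral sequence, the invariance of $\sum(-1)^{p+q}\dim_K E^r_{p,q}$ from $E^2$ to $E^\infty$, and the identity $\sum_p(-1)^p\dim_K H_p(L_2,V_q)=\chi(L_2)\dim_K V_q$ obtained from the finite free resolution. You also make explicit the finiteness checks the paper leaves implicit: that $L_2$ is homologically finitary, that $\chi(L_2)=\sum_i(-1)^i r_i$, and that $E^\infty=E^r$ for large $r$.

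The one slip is your justification of $cd(L)\le cd(L_1)+cd(L_2)$. Vanishing of $H_n(L,-)$ only bounds the homological (flat) dimension, not the cohomological dimension. Use the cohomological spectral sequence $H^p(L_2,H^q(L_1,W))\Rightarrow H^{p+q}(L,W)$ instead. This does not affect the Euler characteristic computation itself.
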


\begin{proof}
Consider the LHS spectral sequence
$E^2_{p,q} = H_p(L_2, H_q(L_1, K))$ that converges to $H_{p+q} (L, K)$.
It is a well known fact that if $\mathcal{C} : \ \ \  \ldots \to C_i \to C_{i-1} \to \ldots$ is a complex of finite length  of finite dimensional ( over $K$) vector spaces then by defining $\chi(\mathcal{C}) = \sum_i (-1)^i dim_K H_i(\mathcal{C})$ we have
$\chi(\mathcal{C}) = \chi(H_*(\mathcal{C})) $ 
where $H_*(\mathcal{C})$ is a complex with zero differentials.
This together with the fact that $E^{r+1} _{*,*}= H_{*,*}(E^r)$ and the fact the spectral sequence is converging implies that
$$\chi(L) =   \sum_{0 \leq i \leq cd(L)} (-1)^i dim_K H_i(L, K)
 = \sum_{p, q \geq 0} (-1)^{p+q} dim_K E^{\infty}_{p,q} =  $$ $$\sum_{p, q \geq 0} (-1)^{p+q} dim_K E^{2}_{p,q} =  \sum_{p, q \geq 0} (-1)^{p+q} dim_K   H_p(L_2, H_q(L_1, K))$$
Let
$$\mathcal{P} :  0 \to U(L_2)^{s_m} \to \ldots  \to U(L_2)^{s_1} \to U(L_2)^{s_0} \to K \to 0$$
be a free resolution of the trivial $U(L_2)$-module $K$ where each module is finitely generated. \iffalse{Then
$$\mathcal{P}^{del} \otimes_{U(L_2)} V_q : 0 \to V_q^{s_m} \to \ldots  \to V_q^{s_1} \to V_q^{s_0} \to 0$$
where}\fi Set
$V_q = H_q(L_1, K).$
Then
$$
\sum_{p} (-1)^{p} dim_K   H_p(L_2, V_q) = \sum_{p} (-1)^{p} dim_K H_p(\mathcal{P}^{del} \otimes_{U(L_2)} V_q ) =$$ $$ \sum_{p} (-1)^{p} dim_K (U(L_2)^{s_p} \otimes_{U(L_2)} V_q ) =\sum_{p} (-1)^{p} s_p dim_K V_q = \chi (L_2) dim_K V_q, \hbox{ hence }
$$
$$
\chi(L) =   
\sum_{q \geq 0} (-1)^{q} \sum_{p \geq 0} (-1)^{p} dim_K   H_p(L_2, V_q) =\sum_{q \geq 0}  \chi (L_2) (-1)^{q} dim_K V_q =  \chi(L_2) \chi(L_1)
$$
\end{proof}

\begin{lemma} \label{Euler0} Suppose $0 \to F_1 \to L \to F_2 \to 0$ be  short exact sequence of Lie algebras with $F_1$ and $F_2$ finitely generated, free, non-abelian Lie algebras. Let $L_0$ be an ideal of $L$ such that $dim_K L/ L_0 = 1$. Then $L_0$ is not finitely presented. 
\end{lemma}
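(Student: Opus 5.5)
We argue by contradiction using the Euler characteristic, via the multiplicativity theorem just proved. Suppose $L_0$ is finitely presented. Let $d_i = d(F_i) \geq 2$. Free Lie algebras have cohomological dimension $1$, and a finitely generated free Lie algebra $F$ on $d$ generators has $\chi(F) = 1 - d$. The trivial module $K$ has the finite free resolution $0 \to U(F)^{d} \to U(F) \to K \to 0$, and $H_i(F,W)$ is finite dimensional for finite dimensional $W$. Hence $F_1$, $F_2$ are homologically finitary, and by the preceding lemma so is $L$.

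By the theorem, $\chi(L) = \chi(F_1)\chi(F_2) = (1-d_1)(1-d_2) \neq 0$. On the other hand, $L_0$ is an ideal of codimension $1$, so we have an extension $0 \to L_0 \to L \to K \to 0$. We want to apply the theorem to it. The trivial $U(K)=K[z]$-module has the finite free resolution $0 \to K[z] \to K[z] \to K \to 0$, so $\chi(K) = 1 - 1 = 0$. If $L_0$ were homologically finitary, the theorem would give $\chi(L) = \chi(L_0)\cdot 0 = 0$, a contradiction. So the whole task reduces to showing that finite presentability of $L_0$ forces $L_0$ to be homologically finitary.

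This reduction is the main obstacle. First, $cd(L_0) \leq cd(L) \leq 2$: subalgebras do not increase cohomological dimension, since $U(L)$ is free over $U(L_0)$ by PBW. Also, $cd(L) \leq cd(F_1)+cd(F_2) = 2$ by the LHS spectral sequence. So we only need $\dim_K H_i(L_0, W) < \infty$ for $i = 0,1,2$ and $W$ finite dimensional. For $i=0,1$ this follows from $L_0$ being finitely generated. For $i=2$ we use that $cd(L_0)\le 2$ and that finite presentability gives type $FP_2$. A resolution $P_2 \to P_1 \to P_0 \to K$ with $P_1,P_0$ finitely generated free has kernel $\ker(P_1\to P_0)$ finitely generated, since $L_0$ is $FP_2$. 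Because $cd(L_0) \leq 2$, that kernel is projective, and we may take it as $P_2$. So $L_0$ is of type $FP$ with finitely generated projectives, and every $H_i(L_0,W)$ is finite dimensional. We then replace $P_2$ by a finitely generated free module, using the Eilenberg trick or by noting that only ranks enter. If the theorem needs the free resolution on the quotient side, which here is $K$, that is already satisfied, so only the finitary hypothesis on $L_0$ is needed.

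Alternatively, one could avoid the general machinery by computing directly with the LHS spectral sequence for $0\to L_0 \to L \to K\to 0$. With $V_q = H_q(L_0,K)$ finite dimensional, we have $H_0(K,V_q)$ and $H_1(K,V_q)$ of equal dimension, namely the cokernel and kernel of an endomorphism of $V_q$. Hence $\chi(L)=0$, again contradicting $(1-d_1)(1-d_2)\neq 0$. Here the key input is still the finiteness of $H_2(L_0,K)$, which follows from finite presentability together with $cd(L_0)\le 2$ as above.
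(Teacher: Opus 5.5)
Your proof is correct and follows the paper's argument: bound $cd(L_0)\le cd(L)\le 2$, then compare $\chi(L)=\chi(F_1)\chi(F_2)=(1-d(F_1))(1-d(F_2))\neq 0$ with $\chi(L)=\chi(L_0)\chi(L/L_0)=\chi(L_0)\cdot 0=0$. You also justify what the paper only asserts, namely that finite presentability together with $cd(L_0)\le 2$ makes $L_0$ of type $FP$, hence homologically finitary, so the multiplicativity theorem applies. Your aside about using the Eilenberg trick to make $P_2$ finitely generated free is not accurate in general, since the swindle produces non-finitely generated free modules, but it is harmless: as you note, the theorem needs only the finitary hypothesis on the kernel $L_0$.
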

\begin{proof} 
Indeed using cohomological dimensions
 $cd(L_0) \leq cd(L) \leq cd(F_1) + cd(F_2) = 2$. Assume that $L_0$ is finitely presented . Then $\chi(L_0)$ is well-defined and by the above result
 $0 \not= ( 1 - d(F_1)) ( 1 - d(F_2)) = \chi(F_1) \chi(F_2) = \chi(L) = \chi(L_0) \chi(L/ L_0) = \chi(L_0) .0 = 0$
 a contradiction.  
\end{proof}

 \iffalse{

\begin{lemma} \cite{K-MP3} An $\mathbb{N}$-graded Lie algebra $L$ is free if and only if $H_2(L, K) = 0$.
\end{lemma}

\begin{lemma} \label{inf-surf} Let $S$ be a surface Lie subalgebra with $I$  an  ideal of $S$ such that $dim_K S/ I = 1$. Then $I$ is free and if $S$ non-abelian then $I$ is
not finitely generated.
\end{lemma}

\begin{proof} Note that $S$ has $\mathbb{N}$-grading with the standard generators having degree 1. Under this grading $I$ is a graded ideal of $S$.

Let $Q = S/ I$. Consider the spectral sequence $$E^2_{i,j} = H_i(Q, H_j(I, K))$$ converging to $H_{i+j}(S, K)$. Since $cd(Q) = 1 $ we have that $E^2_{i,j} = 0$ for $i \geq 2$, hence the spectral sequence colapces, so $E^{\infty}_{i,j} = E^2_{i,j}$. Hence
$$1 = dim_K H_2(S, K) = dim_K E^{\infty}_{0,2} + dim_K  E^{\infty}_{1,1} =
dim_K E^{2}_{0,2} + dim_K  E^{2}_{1,1} =$$ $$dim_K H_0(Q, H_2(I, K)) + dim_K H_1(Q, H_1(I,K)) $$
If $H_1(Q, H_1(I, K)) \not= 0$ then 
$dim_K H_1(Q, H_1(I, K)) \geq 1$, so $dim_K H_0(Q, H_2(I, K)) = 0$. Since $I$ is a graded subalgebra $H_2(I,K) = 0$ is equivalent to $H_0(Q, H_2(I, K)) = 0$.
Then we apply the previous lemma.

Now using that $I$ is free, suppose that $I$ is finitely generated. Then $\chi(I)$ is well defined and
$$2 - d(S) = \chi(S) = \chi(I) \chi (S/ I) = \chi(I) . 0 = 0$$
a contradiction.

 \end{proof}
 
 }\fi

\end{document}